\DeclareMathOperator{\Tor}{Tor}
\DeclareMathOperator{\Ext}{Ext}
\newtheorem{thm}{Theorem}[section] 
\newtheorem{lemma}[thm]{Lemma}     
\newtheorem{cor}[thm]{Corollary}
\newtheorem{prop}[thm]{Proposition}
\theoremstyle{definition}
\newtheorem{defn}[thm]{Definition}
\theoremstyle{definition}
\theoremstyle{definition}
\newtheorem{remk}[thm]{Remark}
\theoremstyle{definition}
\theoremstyle{definition}
\theoremstyle{definition}
\theoremstyle{definition}
\title{\large{\uppercase{A self-dual complete resolution}}}
\author{\normalsize{\uppercase{Rachel N. Diethorn}}}
\date{}
\begin{document}

\maketitle

\begin{abstract}
We construct a self-dual complete resolution of a module defined by a pair of embedded complete intersection ideals in a local ring.  Our construction is based on a gluing construction of Herzog and Martsinkovsky and exploits the structure of Koszul homology in the embedded complete intersection case.  As a consequence of our construction, we produce an isomorphism between certain stable homology and cohomology modules.
\end{abstract}

\vspace{0.3cm}

\noindent\textbf{Keywords:}  complete resolutions, Koszul homology, self-duality, embedded complete intersections

\section{Introduction}

The main motivation for studying self-duality of complete resolutions is the following classical result of Buchsbaum and Eisenbud on self-duality of finite free resolutions.

\begin{thm}(\cite[Theorem 1.5]{BE77})
If $R$ is a local ring and $I\subseteq R$ is a Gorenstein ideal, then the minimal free resolution of $R/I$ over $R$ is self-dual. 
\end{thm}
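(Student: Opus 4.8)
The plan is to run the classical argument: dualize the minimal free resolution, use perfection to see that the dual is again a minimal free resolution — this time of $\Ext^g_R(R/I,R)$ — and then feed in the Gorenstein hypothesis together with uniqueness of minimal free resolutions to identify this dual complex with the original one.

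First I would set $g=\operatorname{grade}(I)$. Since $I$ is a Gorenstein ideal, $R/I$ is a perfect $R$-module, so $\pd_R(R/I)=g$ and the minimal free resolution has the shape $F\colon 0\to F_g\xrightarrow{d_g}F_{g-1}\to\cdots\xrightarrow{d_1}F_0=R\to 0$. Applying $\operatorname{Hom}_R(-,R)$ yields a complex $\operatorname{Hom}_R(F,R)$ of finite free modules concentrated in cohomological degrees $0,\dots,g$ whose cohomology is $\Ext^\bullet_R(R/I,R)$. The key input is that $\Ext^i_R(R/I,R)=0$ for $i\neq g$: vanishing for $i>g$ is immediate from $\pd_R(R/I)=g$, and vanishing for $i<g$ is exactly the statement that $\operatorname{grade}(I)$ equals the least index in which $\Ext^\bullet_R(R/I,R)$ is nonzero. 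Hence, after reindexing $G_i:=\operatorname{Hom}_R(F_{g-i},R)$, the complex $G$ together with the augmentation $G_0\twoheadrightarrow\Ext^g_R(R/I,R)$ is a free resolution of $\omega:=\Ext^g_R(R/I,R)$. It is moreover minimal, since the matrices of the differentials of $G$ are the transposes of those of $F$ and so have entries in the maximal ideal.

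Now I would invoke the definition of a Gorenstein ideal, which supplies an isomorphism $\omega=\Ext^g_R(R/I,R)\cong R/I$. Thus $G$ is a minimal free resolution of $R/I$, as is $F$; by uniqueness of minimal free resolutions over a local ring, $F\cong G$ as complexes. Unwinding the reindexing, an isomorphism $F\xrightarrow{\sim}G$ is precisely a compatible family of isomorphisms $F_i\xrightarrow{\sim}\operatorname{Hom}_R(F_{g-i},R)$, i.e.\ an isomorphism of $F$ with its dual $\operatorname{Hom}_R(F,R)$. This is the asserted \emph{self-duality} (and in particular gives the symmetry $\beta_i(R/I)=\beta_{g-i}(R/I)$ of the Betti numbers, and $F_g\cong R$).

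The delicate point, were one to want the sharper form of the statement, is not the existence of a self-duality isomorphism $\phi\colon F\to\operatorname{Hom}_R(F,R)$ — which is soft, as above — but the claim that $\phi$ may be chosen (anti)symmetric, i.e.\ to agree up to sign with its biduality transpose $\phi^{\ast\ast}$ under the canonical identification $F\cong F^{\ast\ast}$. To obtain this one compares $\phi$ with $\phi^{\ast\ast}$: both are isomorphisms of complexes inducing the same map up to a unit on the single nonzero homology module, so the local uniqueness/minimality argument forces $\phi^{\ast\ast}=u\phi$ for some $u\in R^{\times}$; applying biduality once more gives $u^{2}=1$, and rescaling $\phi$ normalizes the sign. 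Controlling this sign — and, when $g$ is even, distinguishing the symmetric from the alternating behavior at the middle differential — is where the real work lies in the refined argument; the plain self-duality stated here follows already from the dualize-and-compare scheme of the preceding paragraphs.
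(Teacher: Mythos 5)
The paper cites this as a classical result of Buchsbaum and Eisenbud without reproducing a proof, so there is no internal argument to compare against; your dualize-and-compare argument is correct and is exactly the standard route: grade plus finite projective dimension give the one-spot acyclicity of $F^*$, transposed matrices preserve minimality, the Gorenstein hypothesis identifies the cokernel with $R/I$, and uniqueness of minimal resolutions over a local ring supplies the isomorphism $F\cong F^*[-g]$. Two small remarks. First, you should make explicit that "Gorenstein ideal'' is being taken in the Buchsbaum--Eisenbud sense, which builds in perfection; that is what licenses both $\pd_R(R/I)=g$ and the canonical isomorphism $\Ext^g_R(R/I,R)\cong R/I$. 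Second, in your final paragraph the claim that one can "rescale $\phi$ to normalize the sign'' is not right: if $\phi^{**}=u\phi$ then $(a\phi)^{**}=au\phi=u(a\phi)$, so the unit $u$ is invariant under rescaling; the dichotomy $u=\pm1$ (symmetric versus alternating) is a genuine invariant, and Buchsbaum--Eisenbud's actual contribution is to pin it down. Since you explicitly reserve that refinement for "the sharper form'' and derive plain self-duality without it, this slip does not affect the result as stated in the paper.
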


For complete resolutions, the idea of self-duality is not so well understood.  However, a related, but weaker notion of symmetric growth in complete resolutions has been a topic of interest in recent years.  In \cite[Theorem 5.6]{AB2000}, Avramov and Buchweitz prove that over a complete intersection ring, the growth of the Betti sequence of a finitely generated module is the same as the growth of its Bass sequence.  In particular, every minimal complete resolution over a complete intersection ring exhibits symmetric growth; however, this is not the case over an arbitrary Gorenstein ring.  In \cite[Theorem 3.2]{JS2005}, Jorgensen and  {\c{S}}ega construct a complete resolution over a Gorenstein ring that does not grow symmetrically.  On the other hand, they show that minimal complete resolutions over a Gorenstein ring of minimal multiplicity or with low codimension exhibit symmetric growth. In \cite[Corollary 4.3]{BJ2014}, Bergh and Jorgensen examine minimal totally acyclic complexes  and give a criterion under which these complexes have symmetric growth.  In particular, they prove that in a local ring, the minimal complete resolution of a module of finite complete intersection dimension exhibits symmetric growth.  

Given these results, it seems natural to ask which modules have complete resolutions that not only exhibit symmetric growth, but are actually self-dual.  In this paper, we give a class of modules over a local ring which have self-dual complete resolutions.  In particular, we construct the minimal complete resolution of a module defined by a pair of embedded complete intersection ideals over a local ring, and show that it is self-dual.  As a consequence, we establish an isomorphism between certain stable homology and cohomology modules. 

Complete resolutions of such modules have appeared previously in the literature; albeit, with different constructions, and from different perspectives. Indeed, the resolution we construct in this paper turns out to be isomorphic to one given by Kustin and {\c{S}}ega in \cite{KS18} and another given by Eisenbud and Schreyer in \cite{ES21}; although, the latter requires additional hypotheses on the ambient ring.  Another construction given by Buchweitz, Pham, and Roberts appears in the Oberwolfach report \cite{ober}, but without proof.  

We give a different construction that becomes useful in exploring self-duality.  Our construction, which is built upon a gluing construction of Herzog and Martsinkovsky in \cite{HM93} and exploits the structure of Koszul homology in the embedded complete intersection case, allows us to describe, explicitly, the maps in the resolution and an isomorphism between the resolution and its dual.        

We now outline the contents of this paper.  In Section 2 we recall the notions of complete resolutions and stable (co)homology, and we introduce a precise notion of self-duality of complete resolutions.  We also produce an isomorphism between stable homology and cohomology modules in the case that such a resolution exists.  In Section 3, we develop a key ingredient we will need in our main construction of the resolution; namely, we provide an explicit description of the generators of Koszul homology modules in the embedded complete intersection case.  In Section 4, we construct the self-dual complete resolution.

\section{Preliminaries} 
Throughout this paper, we let $Q$ be a commutative, Noetherian, local ring, and we use the notation $(\,\underline{\hspace{11pt}}\,)^*:=\mathrm{Hom}_Q(\,\underline{\hspace{11pt}}\,,Q)$.  We begin this section by recalling some basic definitions and facts that we use throughout the paper.  They can also be found in \cite{AB2000}.

\begin{defn}
A \textit{complete resolution} of a finitely generated $Q$-module $M$ is a complex 
\begin{displaymath}
T\colon\thinspace\thinspace \cdots\rightarrow T_2\rightarrow T_1\rightarrow T_0\rightarrow T_{-1}\rightarrow T_{-2}\rightarrow\cdots
\end{displaymath}
of finitely generated free $Q$-modules which satisfies the following conditions:
\begin{enumerate}
\item[(1)] both $T$ and $T^*$ are exact
\item[(2)] for some $r\geq 0$, the truncation $T_{\geq r}$ is isomorphic to $F_{\geq r}$, where $F$ is a free resolution of $M$ over $Q$.
\end{enumerate}
\end{defn}

Next we discuss stable (co)homology.

\begin{defn}  The \textit{stable (co)homology modules} of $M$ and $N$ are defined as
\begin{align*}
\widehat{\mathrm{Tor}}_n^Q(M,N)&:=H_n(T\underset{Q}{\otimes}N) \\
\widehat{\mathrm{Ext}}^n_Q(M,N)&:=H^n(\text{Hom}_Q(T,N))
\end{align*}
where $T$ is a complete resolution of $M$.
\end{defn}

It is well-known that if a module $M$ has complete resolutions, then any two complete resolutions of $M$ are homotopy equivalent.  Thus, for all integers $n$, the stable (co)homology modules are well-defined; see for example \cite[Lemma 2.4]{ck}.

On the other hand, modules do not always have complete resolutions.  In fact, a module has a complete resolution if and only it has finite Gorenstein dimension.   Thus in a Gorenstein ring, every module has a complete resolution; see for example \cite[4.4.4]{AB2000}.

Now we introduce a precise definition for self-duality of complete resolutions, which we use throughout this paper.  

\begin{defn}
 A complete resolution $T$ is \textit{self-dual} if $T^*$ is isomorphic to $T[\,i\,]$ for some integer $i$.
\end{defn}

In other words, we consider a complete resolution that is isomorphic to its dual, up to some shift, to be self-dual.  Modules that have self-dual complete resolutions satisfy nice (co)homological properties, as shown in the next proposition.

\begin{prop}\label{torext}
If a $Q$-module $M$ has a self-dual complete resolution then for any $Q$-module $N$ there is an isomorphism 
\begin{align*}
\widehat{\Tor}_n^Q(M,N)\cong\widehat{\Ext}_Q^{n+i}(M,N)
\end{align*}
for some integer $i$ and for all integers $n$.
\end{prop}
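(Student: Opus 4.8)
The plan is to unwind the definitions of stable homology and cohomology using a fixed complete resolution $T$ of $M$, and to transport the self-duality isomorphism $T^*\cong T[\,i\,]$ through these definitions. First I would recall that $\widehat{\Tor}_n^Q(M,N)=H_n(T\otimes_Q N)$ and $\widehat{\Ext}_Q^n(M,N)=H^n(\operatorname{Hom}_Q(T,N))$, and that these do not depend on the choice of complete resolution, so we are free to compute with the self-dual one. The key observation is the standard adjunction/tensor-evaluation identity: since each $T_j$ is a finitely generated free $Q$-module, there is a natural isomorphism of complexes $\operatorname{Hom}_Q(T,N)\cong T^*\otimes_Q N$, where $T^*=\operatorname{Hom}_Q(T,Q)$ carries its usual cohomological-to-homological reindexing (i.e. $(T^*)_n=(T_{-n})^*$). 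Taking homology then gives $\widehat{\Ext}_Q^n(M,N)\cong H_{-n}(T^*\otimes_Q N)$, and being careful about the sign convention for the grading of $T^*$ is the one genuinely fiddly point.

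Next I would apply the hypothesis: self-duality says $T^*\cong T[\,i\,]$ as complexes of $Q$-modules for some integer $i$, where $T[\,i\,]$ denotes the shift by $i$. Tensoring this isomorphism with $N$ over $Q$ yields an isomorphism of complexes $T^*\otimes_Q N\cong T[\,i\,]\otimes_Q N=(T\otimes_Q N)[\,i\,]$, since shifting commutes with the functor $-\otimes_Q N$. Passing to homology, $H_n\bigl((T\otimes_Q N)[\,i\,]\bigr)=H_{n\pm i}(T\otimes_Q N)=\widehat{\Tor}_{n\pm i}^Q(M,N)$, with the sign dictated by the shift convention. Combining with the identification from the first paragraph produces $\widehat{\Ext}_Q^n(M,N)\cong\widehat{\Tor}_{n'}^Q(M,N)$ for an index $n'$ that is an affine function of $n$ with slope $-1$; reindexing (replace $n$ by $-(n+i)$ or whatever the bookkeeping dictates) rewrites this in the asserted form $\widehat{\Tor}_n^Q(M,N)\cong\widehat{\Ext}_Q^{n+i}(M,N)$, possibly after absorbing constants into a redefined integer $i$. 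Since the statement only claims existence of \emph{some} integer $i$, the precise constant is immaterial.

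The main obstacle is purely bookkeeping: keeping the three conventions consistent — the homological indexing of the doubly-infinite complex $T$, the reindexing that turns the cochain complex $\operatorname{Hom}_Q(T,Q)$ back into a chain complex $T^*$, and the direction of the shift $[\,i\,]$. There is no real content beyond (i) the natural isomorphism $\operatorname{Hom}_Q(T,N)\cong T^*\otimes_Q N$ valid because $T$ is degreewise finitely generated free, (ii) exactness of $-\otimes_Q N$ applied to an isomorphism of complexes, and (iii) commutativity of shift with tensor. One should also note at the outset that $M$ having a self-dual complete resolution in particular guarantees $M$ has \emph{a} complete resolution, so $\widehat{\Tor}$ and $\widehat{\Ext}$ are defined at all, and then invoke homotopy-invariance so the computation with the specific self-dual $T$ legitimately computes the stable (co)homology. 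With those three facts in hand the proof is a two-line diagram chase after the indices are set up correctly.
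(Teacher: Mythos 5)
Your proposal is correct and, despite running the isomorphism chain in the opposite direction (starting from $\widehat{\Ext}$ and landing on $\widehat{\Tor}$ rather than vice versa), it rests on exactly the same three ingredients the paper uses: the finite-rank-free tensor/Hom identity relating $T\otimes_Q N$ and $\operatorname{Hom}_Q(T^*,N)$, the self-duality $T^*\cong T[\,i\,]$, and compatibility of shift with tensoring and taking homology. Since the statement only asserts existence of \emph{some} shift $i$, the bookkeeping you flag as the fiddly point is indeed immaterial, and your argument is essentially the paper's.
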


\begin{proof} 
If $T$ is a self-dual resolution of $M$ and $T^*\cong T[\,i\,]$, then we have the following isomorphisms
\begin{align*}
\widehat{\Tor}_n^Q(M,N)&=H_n(T\underset{Q}{\otimes}N)\cong H_n(\text{Hom}_Q(T^*,N))\cong H_n(\text{Hom}_Q(T[\,i\,],N)) \\ 
&\cong H_{n+i}(\text{Hom}_Q(T,N))=\widehat{\Ext}_Q^{n+i}(M,N)
\end{align*} 
where the first isomorphism follows from the standard isomorphism $T\otimes_Q N\cong\text{Hom}_Q(T^*,N)$.
\end{proof}

In Section 4, we establish a family of modules which have self-dual complete resolutions and thus, by Proposition \ref{torext}, they have isomorphic homology and cohomology modules.

\section{Generators of Koszul Homology}

We begin this section by establishing some notation that we use throughout the remainder of this paper.  Let $Q$ be a commutative Noetherian Cohen-Macaulay local ring and fix regular sequences $\underline{f}=f_1,\dots,f_r$  and $\underline{g}=g_1,\dots,g_s$ in $Q$, such that $I:=(\underline{f})\subseteq(\underline{g})$.  In this case, we call $I$ an \textit{embedded complete intersection ideal} in $(\underline{g})$.  Let $R=Q/I$ and $J$ be the ideal $(\underline{g})/I$ in $R$, with $g$ the \textit{grade} of $J$; that is, the length of the longest $R$-regular sequence contained in $J$.  Denote by 
\begin{align*}
K(\underline{g};Q)=Q\langle dg_1,\dots,dg_s |\partial_K(dg_i)=g_i\rangle
\end{align*}
and 
\begin{align*}
K(\underline{f};Q)=Q\langle df_1,\dots,df_r |\partial_K(df_i)=f_i\rangle
\end{align*}
the Koszul complexes on the sequences $\underline{g}$ and $\underline{f}$, respectively, and let $H_i(\underline{g};R)$ be the Koszul homology module $H_i(R\otimes_Q K(\underline{g};Q))$. 
\newline
\indent Since $(\underline{f})\subseteq (\underline{g})$, for each $j=1,\dots,r$, we can write 
\begin{align}\label{aji}
f_j=\sum_{i=1}^s a_{ji}g_i 
\end{align}
where $a_{ji}\in Q$.  Denote by $A$ the $s\times r$ matrix of coefficients $(a_{ji})$ and by $A_{j_1,\dots,j_k}^{i_1,\dots,i_k}$ the $k\times k$ minor of $A$ corresponding to the rows $j_1,\dots,j_k$ and the columns $i_1,\dots,i_k$.  
\newline
\indent In this section we provide explicit descriptions of the generators of the Koszul homology modules $H_i(\underline{g};R)$, which we use in the next section to construct a self-dual complete resolution of $R/J$ over $R$.  We first note that in the case of embedded complete intersection ideals, the structure of $H(\underline{g};R)$ is well-understood in the following sense.  We have that $J\subseteq R$ is a quasi-complete intersection ideal as defined in \cite[1.1]{quasici}.  Indeed, since $(\underline{g})$ is a complete intersection, it is a quasi-complete intersection ideal of $Q$, and thus $J$ is a quasi-complete intersection ideal of $R$ by \cite[Lemma 1.4]{quasici}.  Therefore, $H_\ell(\underline{g};R)=\bigwedge^{\ell}H_1(\underline{g};R)$ is an exterior power of the first Koszul homology.  We use this structure to describe the generators of Koszul homology.  The generators we provide in Lemma \ref{embeddedcigens} are given in terms of the minors of the matrix $A$ defined above, and this plays a crucial role in our constructions in the next section.    
\newline
\indent  We also note that one could instead describe the generators of Koszul homology using \cite[Corollary 3.8]{DIETHORN2020106387};  however, due to the extra structure on Koszul homology in the embedded complete intersection case, the descriptions of the generators we obtain below are simpler.
\newline
\indent
Before stating the lemma, we establish some notation we will use in the proof.  For an element $z$ in $(K(\underline{f};Q)\otimes_Q K(\underline{g};Q))_{\ell}$ we write $z=(z_0,...,z_{\ell})$ where each $z_i$ is an element of $K(\underline{f};Q)_{\ell-i}\otimes K(\underline{g};Q)_i$.  For an element $q\in Q$, we denote its image under the surjection $Q\twoheadrightarrow R$ as $\bar{q}$ and for an element $z$ in $ K_{\ell}(\underline{g};R)$, we denote its homology class in $H_{\ell}(\underline{g};R)$ as $[z]$.

\begin{lemma}\label{embeddedcigens} An $R/J$-basis for $H_\ell(\underline{g};R)$ is given by homology classes of the elements
\begin{displaymath}
\sum_{1\leq i_1<\dots<i_\ell\leq s}\overline{A_{j_1,..,j_\ell}^{i_1,\dots,i_\ell}}dg_{i_1}\dots dg_{i_\ell}
\end{displaymath}
for $1\leq j_1<\dots<j_\ell\leq r$.
\end{lemma}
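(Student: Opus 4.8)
The strategy is to realize the displayed elements as products, in the Koszul homology algebra $H(\underline{g};R)$, of a distinguished $R/J$-basis of $H_1(\underline{g};R)$, and then to invoke the exterior-algebra structure $H_\ell(\underline{g};R)=\bigwedge^\ell_{R/J}H_1(\underline{g};R)$ recorded above. First I would set up the computation linking the basis of $H_1$ to the minors of $A$. For $j=1,\dots,r$, put $z_j:=\sum_{i=1}^s\overline{a_{ji}}\,dg_i$ in $R\otimes_Q K(\underline{g};Q)$; since $\partial_K(z_j)=\overline{\sum_i a_{ji}g_i}=\overline{f_j}=0$, each $z_j$ is a cycle. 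Because $R\otimes_Q K(\underline{g};Q)=R\langle dg_1,\dots,dg_s\rangle$ is a DG algebra, the product $z_{j_1}\cdots z_{j_\ell}$ is again a cycle, and expanding it in the exterior algebra and collecting the coefficient of the monomial $dg_{i_1}\cdots dg_{i_\ell}$ with $i_1<\cdots<i_\ell$ gives exactly $\sum_{\sigma\in S_\ell}\operatorname{sgn}(\sigma)\,\overline{a_{j_1 i_{\sigma(1)}}}\cdots\overline{a_{j_\ell i_{\sigma(\ell)}}}=\overline{A^{i_1,\dots,i_\ell}_{j_1,\dots,j_\ell}}$. Hence the element in the statement is a cycle representing the class $[z_{j_1}]\cdots[z_{j_\ell}]\in H_\ell(\underline{g};R)$.

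The core of the proof is then to show that $[z_1],\dots,[z_r]$ is an $R/J$-basis of $H_1(\underline{g};R)$. For generation, write an arbitrary $1$-cycle as $\sum_i\overline{c_i}\,dg_i$ with $c_i\in Q$; the cycle condition means $\sum_i c_i g_i\in I$, so $\sum_i c_i g_i=\sum_j b_j f_j=\sum_i\bigl(\sum_j b_j a_{ji}\bigr)g_i$ for suitable $b_j\in Q$, and since $\underline{g}$ is a $Q$-regular sequence (so $K(\underline{g};Q)$ is exact in positive degrees) the syzygy $\sum_i\bigl(c_i-\sum_j b_j a_{ji}\bigr)dg_i$ is a boundary already over $Q$, hence over $R$; therefore $\bigl[\sum_i\overline{c_i}\,dg_i\bigr]=\sum_j\overline{b_j}[z_j]$. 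For linear independence, the quasi-complete intersection structure of $J$ makes $H_1(\underline{g};R)$ free over $R/J$, and its rank is determined by a count: depth-sensitivity of the Koszul complex puts the top nonvanishing Koszul homology in degree $s-g$, and since $H_\ell(\underline{g};R)=\bigwedge^\ell_{R/J}H_1(\underline{g};R)$ this forces $\operatorname{rank}_{R/J}H_1(\underline{g};R)=s-g$; the Cohen–Macaulay hypothesis together with $I$ being generated by a regular sequence of length $r$ gives $g=\operatorname{ht}J=s-r$, so the rank is $r$. A generating set of $r$ elements of a free module of rank $r$ over a commutative ring is a basis, so $[z_1],\dots,[z_r]$ is one.

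Combining the two steps finishes the proof: since the multiplication of $H(\underline{g};R)$ identifies $H_\ell(\underline{g};R)$ with $\bigwedge^\ell_{R/J}H_1(\underline{g};R)$ and $[z_1],\dots,[z_r]$ is a basis of $H_1(\underline{g};R)$, the products $[z_{j_1}]\cdots[z_{j_\ell}]$ for $1\le j_1<\cdots<j_\ell\le r$ form an $R/J$-basis of $H_\ell(\underline{g};R)$, and by the first paragraph these are the homology classes of the displayed elements. I expect the only subtle point to be the generation step for $H_1$—transferring the cycle and its syzygies between $Q$ and $R$ and using exactness of $K(\underline{g};Q)$ in positive degrees—whereas the multilinear identity expressing $z_{j_1}\cdots z_{j_\ell}$ through the minors of $A$ and the rank bookkeeping are routine.
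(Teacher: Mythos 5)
Your proof is correct, and it reaches the same conclusion by a genuinely different route from the paper. Both arguments reduce to degree $1$ via the quasi-complete-intersection identity $H_\ell(\underline{g};R)=\bigwedge^\ell_{R/J}H_1(\underline{g};R)$ and both verify that the $\ell$-fold product of the $z_j$'s produces the minor coefficients, but the way the basis of $H_1(\underline{g};R)$ is identified differs. The paper computes $H_1(\underline{g};R)\cong\Tor_1^Q(R,R/J)\cong K_1(\underline{f};Q)\otimes_Q R/J$ by comparing the two spectral sequences of the double complex $K(\underline{f};Q)\otimes_Q K(\underline{g};Q)$, then makes that isomorphism explicit via a zig-zag: $df_j\otimes 1$ maps to $[\sum_i\overline{a_{ji}}\,dg_i]$, so the classes $[z_j]$ inherit the basis property at once, giving generation and independence in a single stroke. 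You instead argue the two halves separately: generation by lifting the cycle condition $\sum_i c_ig_i\in I=(\underline f)$ back to a syzygy of $\underline{g}$ over $Q$ and using exactness of $K(\underline{g};Q)$ in positive degrees; and independence by the rank bookkeeping (freeness of $H_1$ from the quasi-CI hypothesis, $\operatorname{rank}=s-g$ from depth-sensitivity plus the exterior-algebra structure, $g=s-r$ from the Cohen--Macaulay hypothesis, and the fact that $r$ generators of a free rank-$r$ module over a commutative ring form a basis). Your route is more elementary and self-contained in the Koszul complex $K(\underline{g};R)$ itself, at the cost of the extra rank count; note that the equality $g=s-r$ you use is exactly the one the paper proves only later, in the proof of Corollary \ref{cohomgens}, so if one adopts your argument that computation should be moved forward.
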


\begin{proof}
By the discussion above, it suffices to find a basis for $H_1(\underline{g};R)$ and show that the product of each set of $\ell$ such basis elements yields the desired formulas.
\newline
\indent  Consider the following isomorphisms
\begin{align*}
H_{\ell}(\underline{g};R)&\cong H_{\ell}(R\underset{Q}{\otimes}K(\underline{g};Q))\cong\Tor_{\ell}^Q(R,R/J)\cong H_{\ell}(K(\underline{f};Q)\underset{Q}{\otimes}R/J)\cong K_{\ell}(\underline{f};Q)\underset{Q}{\otimes} R/J,
\end{align*}
where the second and third isomorphisms follow from the fact that $\underline{f}$ and $\underline{g}$ are regular sequences and the last follows from the fact that $I\subseteq (\underline{g})$.  The isomorphism above is defined by sending an element $b\otimes 1$ in $K_\ell(\underline{f};Q)\otimes_Q R/J$ to $[\bar{z_\ell}]$ where $z=(z_0,\dots,z_\ell)$ is a cycle in $K(\underline{f};Q)\otimes_Q K(\underline{g};Q)$ such that $z_0$ is sent to $b\otimes 1$ in the surjection 
\begin{align*}
K_\ell(\underline{f};Q)\otimes_Q K_0(\underline{g};Q)\twoheadrightarrow K_\ell(\underline{f};Q)\otimes_Q R/J 
\end{align*} 
and $z_\ell$ is sent to $\bar{z_\ell}$ in the surjection
\begin{align*}
K_0(\underline{f};Q)\otimes_Q K_\ell(\underline{g};Q)\twoheadrightarrow R\otimes_Q K_\ell(\underline{g};Q);
\end{align*}
see for example \cite{herzog}.

Taking $z_0=df_j\otimes 1$ and $z_1=\sum_{i=1}^s a_{ji}\otimes dg_i$, we see that 
\begin{align*}
(1\otimes \partial_K)(z_1)=f_j\otimes 1=(\partial_k\otimes 1)(z_{0})
\end{align*}
for each $i=1,\dots,r$.  Thus, $(z_0,z_1)$ is a cycle, and since the elements $df_j\otimes \bar{1}$ for $j=1,\dots,r$ form a basis of $K_{1}(\underline{f};Q)\otimes_Q R/J$, the homology classes of the elements
\begin{align*}
\sum_{i=1}^s \overline{a_{ji}}dg_i
\end{align*}
for $j=1,\dots,r$ form a basis of $H_1(\underline{g};R)$.  
\newline 
\indent  Multiplying any distinct two of these generators together yields
\begin{align*}
\Big(\sum_{i_1=1}^s \overline{a_{j_1,i_1}}dg_{i_1}\Big)\cdot\Big(\sum_{i_2=1}^s \overline{a_{j_2,i_2}}dg_{i_2}\Big)=\sum_{i_1=1}^s\sum_{i_2=1}^s\overline{a_{j_1,i_1}a_{j_2,i_2}}dg_{i_1}dg_{i_2}=\sum_{1\leq i_1<i_2\leq s}\overline{A_{j_1,j_2}^{i_1,i_2}}dg_{i_1}dg_{i_2}.
\end{align*}
Inductively, we obtain the desired formulas.      
\end{proof}

We end this section by giving a formula for the generator of the first nonvanishing Koszul cohomology module, which we use in our construction in the next section.
  
\begin{cor}\label{cohomgens} The homology class of the element 
\begin{displaymath}
\sum_{1\leq i_1<\dots<i_r\leq s}(-1)^{rg+k_1+\dots+k_g}A_{1,..,r}^{i_1,\dots,i_r}dg^*_{k_1}\dots dg^*_{k_g}
\end{displaymath}
where $\{k_1,\dots,k_g\}$ is the complement of the set $\{i_1,\dots,i_r\}$ in $\{1,\dots,s\}$, generates the Koszul cohomology module $H^g(\underline{g};R)$.
\end{cor}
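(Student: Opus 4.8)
The plan is to derive Corollary \ref{cohomgens} from Lemma \ref{embeddedcigens} by dualizing the Koszul complex and invoking the standard self-duality $K(\underline g;Q)^*\cong K(\underline g;Q)[-s]$. Concretely, the Koszul complex $K(\underline g;R)$ satisfies $\operatorname{Hom}_R(K(\underline g;R),R)\cong K(\underline g;R)[-s]$ via the pairing that sends a basis element $dg_{i_1}\cdots dg_{i_\ell}$ to (a sign times) the complementary basis element $dg^*_{k_1}\cdots dg^*_{k_g}$, where $\{k_1,\dots,k_g\}=\{1,\dots,s\}\setminus\{i_1,\dots,i_\ell\}$, with $\ell+g=s$. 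Under this identification, Koszul cohomology is computed by Koszul homology in complementary degree: $H^g(\underline g;R)\cong H_{s-g}(\underline g;R)$. First I would pin down the precise sign convention in this duality isomorphism (this is where the factor $(-1)^{rg+k_1+\cdots+k_g}$ will come from), since the grade $g$ of $J$ equals $s-r$ in this setting — indeed $J$ is generated by $\underline g$ modulo $I=(\underline f)$, so $\operatorname{grade} J = s-r$, hence $s-g=r$, matching the degree $r$ appearing in the displayed formula.

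Next I would apply Lemma \ref{embeddedcigens} with $\ell=r=s-g$. The lemma gives an $R/J$-basis of $H_r(\underline g;R)$ indexed by $r$-subsets $\{j_1<\dots<j_r\}$ of $\{1,\dots,r\}$; since $r$ is the full size of the index set, there is exactly one such subset, namely $\{1,\dots,r\}$, and the single basis element is
\[
\sum_{1\le i_1<\dots<i_r\le s}\overline{A_{1,\dots,r}^{i_1,\dots,i_r}}\,dg_{i_1}\cdots dg_{i_r}.
\]
So $H_r(\underline g;R)$ is cyclic, generated by this class. Transporting this generator across the duality isomorphism $H_r(\underline g;R)\cong H^g(\underline g;R)$ replaces each basis monomial $dg_{i_1}\cdots dg_{i_r}$ by $\pm\, dg^*_{k_1}\cdots dg^*_{k_g}$ for the complementary index set, producing exactly the claimed formula once the sign is tracked. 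In particular $H^g(\underline g;R)$ is the first nonvanishing Koszul cohomology because $H^i(\underline g;R)\cong H_{s-i}(\underline g;R)$ vanishes for $i<g$ (equivalently $s-i>r$, beyond the top nonzero Koszul homology).

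The main obstacle is bookkeeping the signs: one must reconcile the sign built into the Koszul self-duality pairing (which typically carries a factor like $(-1)^{k_1+\dots+k_g}$ from the shuffle reordering $dg_{i_1}\cdots dg_{i_r}\wedge dg_{k_1}\cdots dg_{k_g}=\pm\,dg_1\cdots dg_s$) with an additional overall sign coming from how the duality shift interacts with the differential and with the degree $r$ of the homology class — this is the source of the $(-1)^{rg}$ factor. I would verify the combined sign $(-1)^{rg+k_1+\dots+k_g}$ on a small case (say $s=3$, $r=1$, $g=2$) to fix conventions, and then the general case follows by the same shuffle-sign computation. Everything else — exactness/vanishing in the relevant degrees, cyclicity — is immediate from Lemma \ref{embeddedcigens} and the fact that $s-g=r$.
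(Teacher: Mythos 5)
Your proposal follows essentially the same route as the paper: compute $g=s-r$, specialize Lemma \ref{embeddedcigens} at $\ell=r$ (so the index set $\{j_1<\dots<j_r\}\subseteq\{1,\dots,r\}$ is forced to be all of $\{1,\dots,r\}$, giving a single generator), and transport it through the Koszul self-duality pairing $K_j(\underline g;R)\cong K^{s-j}(\underline g;R)$, with the sign coming from the shuffle permutation. Two small places you leave open are handled explicitly in the paper: the equality $g=s-r$ is not merely because $J$ is generated by $\underline g$ modulo $\underline f$ (that alone does not control grade), but follows from the Cohen--Macaulay hypothesis on $Q$ via $g=\operatorname{ht}J=\dim R-\dim R/J=s-r$; and the sign you propose to check on an example is pinned down directly as $p=j(s-j)+k_1+\cdots+k_{s-j}$ for the shuffle $dg_{i_1}\cdots dg_{i_j}dg_{k_1}\cdots dg_{k_{s-j}}$, which with $j=r$ and $s-j=g$ gives exactly $(-1)^{rg+k_1+\cdots+k_g}$.
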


\begin{proof}
We first note that since $Q$ is Cohen-Macaulay, we have equalities
\begin{align*}
g=\text{ht}J=\text{dim}R-\text{dim}R/J=\text{dim}Q-r-(\text{dim}Q-s)=s-r.
\end{align*}
Now the result follows directly from Lemma \ref{embeddedcigens} and the self-duality isomorphism of the Koszul complex, 
\begin{align*}
K_j(\underline{g};R)&\overset{\cong}\longrightarrow K^{s-j}(\underline{g};R) \\
dg_{i_1}\dots dg_{i_j}&\longmapsto (-1)^p dg^*_{k_1}\dots dg^*_{k_{s-j}}
\end{align*}
where $\{k_1,\dots,k_{s-j}\}$ is the complement of $\{i_1,\dots,i_j\}$ in $\{1,\dots,s\}$ and $p$ is the sign of the permutation $dg_{i_1}\dots dg_{i_j}dg_{k_1}\dots dg_{k_{s-j}}$; see for example \cite[Proposition 17.15]{eisenbudbook}.  It is easy to see that $p=j(s-j)+k_1+\dots+k_{s-j}$. 
\end{proof}

\section{The Self-Dual Complete Resolution}

In this section we construct a self-dual complete resolution of $R/J$ over $R$ under the assumptions introduced at the beginning of the previous section.  
\newline
\indent  We begin our construction by considering the Tate resolution of $R/J$ over $R$, which is known to be a DG-algebra resolution.  By  \cite[Theorem 4]{tate}, since $\underline{f}$ and $\underline{g}$ are both regular sequences on $Q$, this resolution is precisely 
\begin{displaymath}
\mathbb{F}=R\langle dg_1,\dots,dg_s,T_1,\dots,T_r | \thinspace\thinspace\partial (dg_i)=g_i, \thinspace\thinspace\partial (T_j)=\sum_{i=1}^s a_{ji}dg_i\rangle.
\end{displaymath}
As in \cite[Section 3]{HM93}, we view this resolution as the total complex of the double complex 
\begin{align*}
\mathbb{F}_{i,j}=\bigwedge\nolimits^{i} G\otimes D_j F
\end{align*}
where $G$ is the free $R$-module with basis $dg_1,\dots,dg_s$, $\bigwedge G$ denotes the exterior algebra on that basis, $F$ is the free $R$-module with basis $T_1,\dots,T_r$, and $D$ denotes the divided power algebra on that basis.  We observe that the vertical maps in the double complex are given by the differentials of the Koszul complex $K(\underline{g};R)$, and the horizontal maps send $T_j$ to $\sum_{i=1}^s a_{ji}dg_i$.  For ease of notation we set $\bigwedge^i \otimes D_j :=\bigwedge^i G\otimes D_j F$.  
\newline
\indent  Dualizing the double complex $\mathbb{F}$, and we note that  
\begin{align*}
\left(\mathbb{F}_{i,j}\right)^*=\left(\bigwedge\nolimits^{i} G\otimes D_j F\right)^*=\left(\bigwedge\nolimits^{i}G\right)^*\otimes \left(D_jF\right)^*=\bigwedge\nolimits^i G^*\otimes S_jF^*
\end{align*}
where $S$ denotes the symmetric algebra on the dual basis of $F$.  We denote by $\bigwedge^i \otimes S_j :=\bigwedge^i G^*\otimes S_j F^*$, and we note that in $\mathbb{F}^*$ the vertical maps $\mu_x$ are given by right multiplication by $x=\sum_{j=i}^s g_idg^*_i$ and the horizontal maps $d$ send $dg^*_i$ to $\sum_{j=1}^r a_{ji}T_j^*$. 
\newline 
\indent  By \cite[Theorem 2.5]{quasici}, since $J$ is a quasi-complete intersection ideal, it is a \textit{quasi-Gorenstein ideal}; that is, there are isomorphisms
\begin{align}\label{quasigor}
\Ext_R^i(R/J,R)\cong\begin{cases}
R/J & i=g \\
0 & i\neq g
\end{cases}.
\end{align} 
\newline
\indent Following work of Herzog and Martsinkovsky in \cite[Section 3]{HM93} --- see also \cite{quasici} for another gluing construction --- we seek to glue the double complex $\mathbb{F}$ to its dual.  That is, we aim to define a map $v:\mathbb{F}\rightarrow\mathbb{F}^*[-g]$ as shown in diagram (\ref{diagram}) below that identifies the homologies $H_0(\mathbb{F})=R/J$ and 
\begin{align}\label{ext}
H^g(\mathbb{F}^*)=H^g(\mathrm{Hom}_R(\mathbb{F},R))=\mathrm{Ext}_R^g(R/J,R)\cong R/J,
\end{align}
where the isomorphism follows from (\ref{quasigor}).  

{\small{\begin{align}\label{diagram}
\begin{tikzcd}[ampersand replacement=\&, column sep=small, row sep=small]
\, \& \vdots \arrow{d} \& \vdots\arrow{d} \& \vdots\arrow{d} \& \vdots\arrow{d} \& \vdots\arrow{d} \& \vdots\arrow{d} \& \, \\
\dots \arrow{r} \& \bigwedge^1\otimes D_2 \arrow{r}{d^T}\arrow{d}{\mu_x^T} \& \bigwedge^2\otimes D_1 \arrow{r}{d^T}\arrow{d}{\mu_x^T} \& \bigwedge^3\otimes D_0 \arrow{r}{v_3}\arrow{d}{\mu_x^T} \& \bigwedge^{g-3}\otimes S_0 \arrow{r}{d}\arrow{d}{\mu_x} \& \bigwedge^{g-4}\otimes S_1 \arrow{r}{d}\arrow{d}{\mu_x} \& \bigwedge^{g-5}\otimes S_2 \arrow{r}\arrow{d}{\mu_x} \& \dots \\
0 \arrow{r} \& \bigwedge^0\otimes D_2 \arrow{r}{d^T}\arrow{d} \& \bigwedge^1\otimes D_1 \arrow{r}{d^T}\arrow{d}{\mu_x^T} \& \bigwedge^2\otimes D_0 \arrow{r}{v_2}\arrow{d}{\mu_x^T} \& \bigwedge^{g-2}\otimes S_0 \arrow{r}{d}\arrow{d}{\mu_x} \& \bigwedge^{g-3}\otimes S_1 \arrow{r}{d}\arrow{d}{\mu_x} \& \bigwedge^{g-4}\otimes S_2 \arrow{r}\arrow{d}{\mu_x} \& \dots \\
\, \& 0 \arrow{r} \& \bigwedge^0\otimes D_1 \arrow{r}{d^T}\arrow{d} \& \bigwedge^1\otimes D_0 \arrow{r}{v_1}\arrow{d}{\mu_x^T} \& \bigwedge^{g-1}\otimes S_0 \arrow{r}{d}\arrow{d}{\mu_x} \& \bigwedge^{g-2}\otimes S_1 \arrow{r}{d}\arrow{d}{\mu_x} \& \bigwedge^{g-3}\otimes S_2 \arrow{r}\arrow{d}{\mu_x} \& \dots \\
\, \& \, \& 0 \arrow{r} \& \bigwedge^0\otimes D_0 \arrow{r}{v_0}\arrow{d} \& \bigwedge^{g}\otimes S_0 \arrow{r}{d}\arrow{d} \& \bigwedge^{g-1}\otimes S_1 \arrow{r}{d}\arrow{d} \& \bigwedge^{g-2}\otimes S_2 \arrow{r}\arrow{d} \& \dots \\
\, \& \, \& \, \& 0 \& \vdots \& \vdots \& \vdots \& \, 
\end{tikzcd}
\end{align}}}

The diagram shows the resolution $\mathbb{F}$ on the left, its dual on the right, and a gluing map $v$ between them.  Apriori, the gluing map $v$ need not land in the first column as the diagram shows; however the gluing map we define in this section does indeed satisfy this property.

We note that since $\mathbb{F}^*$ is a module over $\mathbb{F}$, to define the gluing map $v$, it suffices to define $v_0$ and take $v$ to be multiplication by the element $v_0(1)$.  We will define $v_0$ carefully in order to construct a complete resolution of $R/J$ over $R$ which is self-dual.  
\newline
\indent
To this end, we denote by $\beta$ the image of $1+J$ under the isomorphism 
\begin{align}\label{eq:26}
R/J\rightarrow\Ext_R^g(R/J,R)
\end{align}
from (\ref{quasigor}).  Given the equality $\mathrm{Ext}_R^g(R/J,R)=H^g(\mathbb{F}^*)$ from (\ref{ext})
we have that 
\begin{align*}
\beta=\alpha+\mathrm{Im}\,\partial
\end{align*}
where $\alpha=(\alpha_0,\dots,\alpha_{\lceil\frac{g+1}{2}\rceil-1})\in\mathrm{Ker}\,\partial\setminus\mathrm{Im}\,\partial$ with each $\alpha_i\in\bigwedge^{g-2i}\otimes S_i$ and where $\partial$ denotes the differential on $\mathbb{F}^*$.  With this notation and the diagram (\ref{diagram}) in mind, we get the following technical lemma, which says that the element $\alpha_0$ is not a boundary in the dual Koszul complex $K(\underline{g};R)$.   

\begin{lemma}\label{notboundary}
The element $\alpha_0$ satisfies the property that $\alpha_0\neq\mu_x(\delta_0)$ for any $\delta_0\in\bigwedge^{g-1}\otimes S_0$.
\end{lemma}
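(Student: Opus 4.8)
The plan is to argue by contradiction: suppose $\alpha_0 = \mu_x(\delta_0)$ for some $\delta_0 \in \bigwedge^{g-1}\otimes S_0 = \bigwedge^{g-1}G^*$. The key idea is that $\mu_x$ is precisely the vertical differential in the bottom row of the dual part of diagram~(\ref{diagram}), i.e.\ the differential of the dual Koszul complex $K^\bullet(\underline{g};R)$ in cohomological degree $g$ (placing $\bigwedge^g G^* \otimes S_0$ in the corner). Since $\alpha = (\alpha_0, \dots, \alpha_{\lceil (g+1)/2\rceil - 1})$ is a cycle for the total differential $\partial$ on $\mathbb{F}^*[-g]$, the component $\alpha_0$ must be a cocycle for $\mu_x$ (it sits in the column that maps only via $\mu_x$ downward, landing in the augmentation-free bottom corner $0$). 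So $\alpha_0$ represents a class in $H^g(\underline{g};R) = H^g(\mathbb{F}^* \text{ restricted to } S_0\text{-part})$. First I would make this identification precise: the subcomplex of $\mathbb{F}^*$ consisting of the $S_0$-strand is exactly the dual Koszul complex on $\underline{g}$ over $R$, and $H^g(\underline{g};R) \cong R/J \neq 0$ by Corollary~\ref{cohomgens} (or directly from the quasi-Gorenstein property~(\ref{quasigor})).

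Next I would show that if $\alpha_0$ were a $\mu_x$-boundary, then the whole class $\beta = \alpha + \operatorname{Im}\partial$ would be forced to vanish, contradicting $\alpha \in \operatorname{Ker}\partial \setminus \operatorname{Im}\partial$. The mechanism: $\beta$ is the image of $1+J$ under the isomorphism~(\ref{eq:26}), hence $\beta \neq 0$. On the other hand, there is a natural surjection (coming from the inclusion of the $S_0$-strand into $\mathbb{F}^*[-g]$, which realizes the top cohomology) sending the class of $\alpha$ to the class of $\alpha_0$ in $H^g(\underline{g};R)$; because both $\operatorname{Ext}^g_R(R/J,R)$ and $H^g(\underline{g};R)$ are isomorphic to $R/J$ and this map is induced by a map of complexes that is compatible with the module structure, it is in fact an isomorphism. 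Therefore $\alpha_0$ being a $\mu_x$-boundary would mean its class in $H^g(\underline{g};R)$ is zero, forcing the class of $\alpha$ in $\operatorname{Ext}^g_R(R/J,R)$ to be zero, i.e.\ $\alpha \in \operatorname{Im}\partial$ — the desired contradiction.

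The main obstacle I anticipate is making the second paragraph's comparison map rigorous: one needs to verify that the edge map from the total cohomology $H^g(\mathbb{F}^*[-g])$ to the cohomology of the $S_0$-strand is an isomorphism rather than merely a map, and that it carries the distinguished generator to a generator. This is a spectral-sequence / filtration argument on the double complex: filter $\mathbb{F}^*$ by the divided-power (now symmetric) degree $j$, so the $E_1$ page has the Koszul cohomologies $H^\bullet(\underline{g};R) \otimes S_j F^*$, and the relevant corner $H^g(\underline{g};R)\otimes S_0$ is a permanent cycle contributing the full $\operatorname{Ext}^g$. Alternatively, and perhaps more cleanly for the paper's level of explicitness, I would invoke Corollary~\ref{cohomgens}: it gives an explicit generator of $H^g(\underline{g};R)$ in terms of the minors $A^{i_1,\dots,i_r}_{1,\dots,r}$, and one can check directly that $\alpha_0$ must be a unit multiple of this generator modulo $J$ (since any valid gluing class must restrict to a generator of the top Koszul cohomology), hence in particular nonzero in $H^g(\underline{g};R)$, which is exactly the statement that $\alpha_0 \notin \operatorname{Im}\mu_x$. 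Either route reduces the lemma to the nonvanishing of $H^g(\underline{g};R) \cong R/J$, which is already established.
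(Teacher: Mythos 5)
Your plan rests on the same key fact as the paper's proof: the vanishing of the dual Koszul cohomology $H^i(\underline{g};R)$ for $i<g$, equivalently the exactness of the columns of $\mathbb{F}^*$ below degree $g$. The difference is packaging. The paper runs a direct zig-zag chase: assuming $\alpha_0 = \mu_x(\delta_0)$, commutativity and the cycle condition on $\alpha$ give $d(\delta_0) - \alpha_1 \in \mathrm{Ker}\,\mu_x$; column exactness lifts this to some $\delta_1$ with $d(\delta_0) - \alpha_1 = \mu_x(\delta_1)$; and induction produces $\delta = (\delta_0, \delta_1, \dots)$ with $\alpha = \partial(\delta)$, contradicting $\alpha\notin\mathrm{Im}\,\partial$. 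You instead propose to show that the edge map $H^g(\mathbb{F}^*) \to H^g(\underline{g};R)$, $[\alpha]\mapsto[\alpha_0]$, is an isomorphism via the column-filtration spectral sequence. That route does work (the required degeneration is that $E_1^{0,g}=H^g(\underline{g};R)$ is the only nonzero term contributing to total degree $g$, which follows from the same vanishing), and the paper's chase is essentially the elementary unwinding of that edge-map statement; the paper's version buys a shorter, self-contained argument at the cost of your conceptual gloss.

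Two points to tighten, though. First, the comparison map is not induced by an inclusion: the $S_0$-column is not a subcomplex of $\mathbb{F}^*$, since the horizontal differential $d$ carries it into the $S_1$-column. It is the projection $\mathbb{F}^*\twoheadrightarrow \bigwedge^{\bullet} G^*\otimes S_0$ that is the chain map, making the $S_0$-strand a quotient complex. Second, and more importantly, ``because both sides are isomorphic to $R/J$ \dots it is in fact an isomorphism'' is not an argument on its own: a map between modules each abstractly isomorphic to $R/J$ need not be an isomorphism, and establishing that this particular edge map is one is exactly the content of the lemma. Your spectral-sequence remedy repairs this cleanly, but your alternative remedy via Corollary~\ref{cohomgens} is circular: in the paper, the explicit form~(\ref{eq:alpha0}) of $\alpha_0$ is deduced \emph{from} this lemma together with Corollary~\ref{cohomgens}, so you cannot turn around and assert that ``any valid gluing class must restrict to a generator of the top Koszul cohomology'' without first establishing what this lemma asserts.
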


\begin{proof}
Suppose that $\alpha_0=\mu_x(\delta_0)$ for some $\delta_0\in\wedge^{g-1}\otimes S_0$.  Since each side of diagram (\ref{diagram}) commutes, we have that $d(\alpha_0)=\mu_x(d(\delta_0))$. On the other hand, since $\alpha\in\text{Ker}\,\partial$, we have that $\mu_x(\alpha_1)=d(\alpha_0)$.  Thus $d(\delta_0)-\alpha_1\in\text{Ker}\,\mu_x$. 
\newline
\indent  We note that each column on the right half of the diagram (\ref{diagram}) is the dual Koszul complex on $g_1,\dots,g_s$. By self-duality of the Koszul complex, since the last nonvanishing Koszul homology module is $H_{s-g}(\underline{g};R)$, the first nonvanishing Koszul cohomology module is $H^g(\underline{g};R)$.  Thus the columns on the right are exact in degrees lower than $g$, and we have that $d(\delta_0)-\alpha_1=\mu_x(\delta_1)$, for some $\delta_1\in\bigwedge^{g-3}\otimes S_1$. 
\newline
\indent  Inductively, we have elements $\delta_i$, for $i=0,\dots,\lceil\frac{g+1}{2}\rceil-1$, such that each $d(\delta_i)-\alpha_{i+1}=\mu_x(\delta_{i+1})$.  Thus we have 
\begin{align*}
\alpha=\partial(\delta_0,\dots,\delta_{\lceil\frac{g+1}{2}\rceil-1}),
\end{align*}
which is a contradiction.
\end{proof}

Continuing our construction, we note that since $\alpha$ is a cycle in $\mathbb{F}^*$, we must have that $\alpha_0$ is a cycle in the dual Koszul complex; however, it is not a boundary by Lemma \ref{notboundary}.  Thus by Corollary \ref{cohomgens}, we see that 
\begin{align}\label{eq:alpha0}
\alpha_0=t\Bigg(\sum_{1\leq i_1<\dots<i_r\leq s}(-1)^{k_1+\dots+k_g}A_{1,..,r}^{i_1,\dots,i_r}dg^*_{k_1}\dots dg^*_{k_g}\Bigg)+\mu_x(\gamma),
\end{align} 
for some nonzero $t\in R$ and some $\gamma\in\wedge^{g-1}\otimes S_0$.  
\newline
\indent  Now we define the gluing map $v$ as follows.  Define the map $v_0:\mathbb{F}_{0}\rightarrow\mathbb{F}_{g}^*$ such that $v_0(1)=\epsilon$, where
\begin{align}\label{eq:epsilon}
\epsilon=\Bigg(t\Bigg(\sum_{1\leq i_1<\dots<i_r\leq s}(-1)^{k_1+\dots+k_g}A_{1,\dots,r}^{i_1,\dots,i_r}dg^*_{k_1}\dots dg^*_{k_g}\Bigg),0,\dots,0\Bigg),
\end{align}
and extend this to the chain map 
\begin{align}\label{v}
v:\mathbb{F}&\longrightarrow\mathbb{F}^*[-g] \\
\eta&\longmapsto\epsilon\cdot\eta.\nonumber
\end{align}
Thus the gluing map does indeed land in the first column on the right side of the diagram (\ref{diagram}).
\newline
\indent We aim to show that the map $v$ is a quasi-isomorphism.  To accomplish this, we need some technical lemmas.  

\begin{lemma}\label{horizzero}
The element 
\begin{align*}
\sum_{1\leq i_1<\dots<i_r\leq s}(-1)^{k_1+\dots+k_g}A_{1,\dots,r}^{i_1,\dots,i_r}dg^*_{k_1}\dots dg^*_{k_g} 
\end{align*}
is in the kernel of the map $d$.
\end{lemma}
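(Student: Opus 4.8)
Write $\xi$ for the element displayed in the statement. The plan is to compute $d(\xi)$ directly and to check that the coefficient of each basis monomial of $\bigwedge^{g-1}G^*\otimes S_1F^*$ vanishes because it is, up to sign, the determinant of a matrix with two equal rows. First I would record that the horizontal differential $d$ acts as a derivation of the bigraded $R$-algebra $\bigwedge G^*\otimes SF^*$ determined by $d(dg^*_i)=\sum_{j=1}^r a_{ji}T^*_j$ and $d(T^*_j)=0$; it acts this way since it is dual to the horizontal differential of $\mathbb{F}$. Setting $\lambda_i:=\sum_{j=1}^r a_{ji}T^*_j$, the Leibniz rule gives
\begin{align*}
d\bigl(dg^*_{k_1}\cdots dg^*_{k_g}\bigr)=\sum_{m=1}^{g}(-1)^{m-1}\lambda_{k_m}\,dg^*_{k_1}\cdots\widehat{dg^*_{k_m}}\cdots dg^*_{k_g},
\end{align*}
so $d(\xi)$ becomes a double sum indexed by the $r$-subsets $\{i_1<\dots<i_r\}\subseteq\{1,\dots,s\}$ and the elements $k_m$ of the complementary $g$-subset.

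Next I would reindex this double sum by pairs $(L,p)$, where $L:=\{i_1,\dots,i_r\}\cup\{k_m\}$ is an $(r+1)$-subset of $\{1,\dots,s\}$ --- note $r+1=s-(g-1)$ since $g=s-r$ by Corollary~\ref{cohomgens} --- and $p:=k_m\in L$. Collecting the coefficient of $dg^*_{L^c}$, where $L^c$ is the complement of $L$ in $\{1,\dots,s\}$, and carrying out the sign bookkeeping (using $\sum_{k\in L^c\cup\{p\}}k=\binom{s+1}{2}-\sum_{i\in L\setminus\{p\}}i$, and that the position of $p$ in $L$ together with the position of $p$ in $L^c\cup\{p\}$ is pinned down by $p$ itself), one finds that this coefficient equals, up to a sign depending only on $L$, the Laplace expansion along the last row of the determinant of the $(r+1)\times(r+1)$ matrix whose first $r$ rows are $(a_{j,\ell})_{\ell\in L}$, $j=1,\dots,r$, and whose last row is $(\lambda_\ell)_{\ell\in L}$.

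Finally, since $\lambda_\ell=\sum_{m=1}^r a_{m\ell}T^*_m$, multilinearity of the determinant in the last row expresses this determinant as $\sum_{m=1}^r T^*_m$ times the determinant of the matrix obtained by replacing the last row with $(a_{m,\ell})_{\ell\in L}$; each of these matrices has its $m$-th row equal to its last row, so the determinant is $0$. Hence every coefficient of $d(\xi)$ vanishes, proving the lemma. I expect the sign computation in the reindexing step to be the only real obstacle; the rest is formal. Conceptually, the vanishing is just the identity $\sum_{j=1}^r T^*_j\,(\zeta_j\wedge\zeta_1\wedge\cdots\wedge\zeta_r)=0$ for the Koszul cycles $\zeta_j=\sum_i a_{ji}dg_i$ of Lemma~\ref{embeddedcigens}, to which $d(\xi)$ is carried by Koszul self-duality (as $\xi$ corresponds to $\zeta_1\wedge\cdots\wedge\zeta_r$), each $\zeta_j$ occurring twice.
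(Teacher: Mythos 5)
Your argument is essentially the same as the paper's: both compute $d(\xi)$ directly and reduce the vanishing of each coefficient to a determinant with a repeated row. The only organizational difference is that you collect the coefficient of $dg^*_{L^c}$ first and recognize it as the Laplace expansion, along the row of linear forms $\lambda_\ell$, of an $(r+1)\times(r+1)$ matrix, then finish by multilinearity, whereas the paper fixes the $T_m$-coefficient first so that the repeated-row determinant $A^{q_1,\dots,q_{r+1}}_{m,1,\dots,m,\dots,r}$ appears directly. Do note that the step you label ``the only real obstacle'' --- verifying that the sign accompanying each $p\in L$ is exactly the alternating cofactor sign, so that the inner sum really is a Laplace expansion --- is precisely where the paper's proof spends its effort, checking that $(-1)^{q_i+s_i}$ has opposite parity for consecutive $i$; your sketch of that bookkeeping is plausible but would need to be carried out to complete the proof.
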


\begin{proof}
For ease of notation, we set $K=k_1+\dots+k_g$, and we denote by $\sum_I$ the sum over $1\leq i_1<\dots<i_r\leq s$.  Applying $d$ to the given element, we see that it suffices to show that the coefficient of $T_m$ in the sum 
\begin{align*}
\sum_I(-1)^{K}A_{1,..,r}^{i_1,\dots,i_r}d(dg^*_{k_1}\dots dg^*_{k_g})=\sum_{I}(-1)^{K}A_{1,..,r}^{i_1,\dots,i_r}\sum_{n=1}^g dg^*_{k_1}\dots\widehat{dg^*_{k_n}}\dots dg^*_{k_g}\sum_{\ell=1}^r a_{\ell,k_n}T_{\ell}
\end{align*}
is zero for all $m=1,\dots,r$.  We observe that its coefficient is precisely
\begin{align*}
\sum_{I}(-1)^{K}A_{1,..,r}^{i_1,\dots,i_r}\sum_{n=1}^g (-1)^{n+1} dg^*_{k_1}\dots\widehat{dg^*_{k_n}}\dots dg^*_{k_g}a_{m,k_n},
\end{align*}
so it suffices to show that the coefficient of any $dg^*_{p_1}\dots dg^*_{p_{g-1}}$ in this sum is zero.  
\newline
\indent  Thus the terms of the sum we are interested in are the ones where we sum over all sets $\{i_1,\dots,i_r\}$ whose complement in $\{1,..,s\}$ contains the set $\{p_1,\dots,p_{g-1}\}$.  There are $s-(g-1)=r+1$ such sets, namely,
\begin{align*}
\{q_2,\dots,q_{r+1}\},\dots,\{q_1,\dots,\widehat{q_i},\dots,q_{r+1}\},\dots,\{q_1,\dots,q_r\},
\end{align*}
and their complements are 
\begin{align*}
\{p_1,\dots,p_{g-1},q_1\},\dots,\{p_1,\dots,p_{g-1},q_{r+1}\}
\end{align*}
respectively, where $\{q_1,\dots,q_{r+1}\}$ is the complement of $\{p_1,\dots,p_{g-1}\}$ in $\{1,\dots,s\}$.  
\newline
\indent  Let $s_i$ denote the spot in which $q_i$ sits when $\{p_1,\dots,p_{g-1},q_i\}$ is ordered.  Thus the coefficient of interest is 
\begin{align*}
\sum_{i=1}^{r+1} (-1)^{p_1+\dots+p_{g-1}+q_i+s_i+1}A_{1,\dots,r}^{q_1,\dots,\widehat{q_i},\dots,q_{r+1}}a_{m,q_i}.
\end{align*}
We note that if the signs in the sum above are alternating, then this coefficient is zero.  Indeed, it would equal the determinant $A_{m,1,\dots,m,\dots,r}^{q_1,\dots,q_{r+1}}$, which is zero as a row is repeated.  
\newline 
\indent  Thus it suffices to show that $(-1)^{q_i+s_i}$ is the opposite of $(-1)^{q_{i+1}+s_{i+1}}$ for any $1\leq i\leq r+1$.  We assume that $q_i$ and $q_{i+1}$ have the same parity, as the argument for the case in which they have different parity is essentially the same.  Note that in this case, there is an odd number of indices $p_j$ between $q_i$ and $q_{i+1}$ in the set $\{1,\dots,s\}$.  Thus $s_{i+1}$ is the sum of $s_i$ and an odd number.  Hence $s_i$ and $s_{i+1}$ have different parity, and $(-1)^{q_i+s_i}$ and $(-1)^{q_{i+1}+s_{i+1}}$ are opposites. Therefore the signs in the sum are alternating as desired. 
\end{proof}

We also need the following lemma of Miller from \cite{miller} in the proof of our next proposition. 

\begin{lemma}\label{millerlemma}
If $\gamma=(\gamma_0,\dots,\gamma_{\lceil\frac{g+1}{2}\rceil-1})\in\mathrm{Ker}\,\partial$ and $d(\gamma_0)=0$, then there is an element $\gamma'=(\gamma_0,0,\dots,0)\in\mathrm{Ker}\,\partial$ such that $[\gamma]=[\gamma']$. 
\end{lemma}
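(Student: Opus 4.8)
The plan is to argue entirely inside the double complex whose total complex is $\mathbb{F}^{*}$. Its columns on the right of diagram~(\ref{diagram}) are copies of the dual Koszul complex $K^{\bullet}(\underline{g};R)$ tensored with the free modules $S_{j}F^{*}$, and its horizontal maps are the $d$'s. Write $\gamma=(\gamma_{0},\dots,\gamma_{m})$ with $m=\lceil\frac{g+1}{2}\rceil-1$ and $\gamma_{j}\in\bigwedge^{g-2j}\otimes S_{j}$, so that every component lies in cohomological degree $g$ of $\mathbb{F}^{*}$ and $g-2m\in\{0,1\}$. The first point is that $\gamma'=(\gamma_{0},0,\dots,0)$ is already a cycle: the only possibly nonzero components of $\partial\gamma'$ are $\mu_{x}(\gamma_{0})\in\bigwedge^{g+1}\otimes S_{0}$, which vanishes because $\gamma\in\mathrm{Ker}\,\partial$ forces $\mu_{x}(\gamma_{0})=0$, and $d(\gamma_{0})$, which vanishes by hypothesis. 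Hence $\gamma'\in\mathrm{Ker}\,\partial$, and it remains only to show that $\gamma-\gamma'=(0,\gamma_{1},\dots,\gamma_{m})$ is a boundary.

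To do this I would construct an element $\rho=(\rho_{0},\rho_{1},\dots)$ in the next lower anti-diagonal, $\rho_{j}\in\bigwedge^{g-1-2j}\otimes S_{j}$, with $\partial\rho=\gamma-\gamma'$. Setting $\rho_{0}=0$ and comparing components, this reduces to solving $\mu_{x}(\rho_{j})=\gamma_{j}-d(\rho_{j-1})$ (up to the signs dictated by the total-complex convention) successively for $j=1,\dots,m$. The tool that makes each of these equations solvable is the exactness of the columns on the right of~(\ref{diagram}) in cohomological degrees below $g$ --- the same fact used in the proof of Lemma~\ref{notboundary}, which follows from self-duality of the Koszul complex together with the vanishing of $H^{i}(\underline{g};R)$ for $i<g$ --- and, at the very bottom of each column, the injectivity of $\mu_{x}$ on $\bigwedge^{0}\otimes S_{j}$, which is precisely the statement that $(0:_{R}J)=0$, i.e. that $\operatorname{grade}J=g\ge1$.

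The induction runs as follows. The cycle relations for $\gamma$ give $\mu_{x}(\gamma_{j+1})=\pm d(\gamma_{j})$; combined with $d(\gamma_{0})=0$ this yields $\mu_{x}(\gamma_{1})=0$, so $\gamma_{1}$ is a Koszul cocycle in $\bigwedge^{g-2}\otimes S_{1}$, hence a coboundary since $g-2<g$, and we pick $\rho_{1}$ accordingly. Assuming $\rho_{j-1}$ has been chosen with $\mu_{x}(\rho_{j-1})=\gamma_{j-1}-d(\rho_{j-2})$, a short computation using $d^{2}=0$, $\mu_{x}^{2}=0$, the (anti)commutativity of $d$ and $\mu_{x}$, and the relation $\mu_{x}(\gamma_{j})=\pm d(\gamma_{j-1})$ shows that $\gamma_{j}-d(\rho_{j-1})$ is again a Koszul cocycle, now in $\bigwedge^{g-2j}\otimes S_{j}$ with $g-2j<g$, hence a coboundary, and we choose $\rho_{j}$. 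Finally, when the anti-diagonal reaches $\bigwedge^{0}$ (the case $g$ even) the last equation forces $\gamma_{m}-d(\rho_{m-1})$ to be a Koszul cocycle in $\bigwedge^{0}\otimes S_{m}$, which is $0$ by injectivity of $\mu_{x}$ there; when $g$ is odd the anti-diagonal reaches $\bigwedge^{1}$, one further step uses $H^{1}(\underline{g};R)=0$, and afterwards $d$ lands in $\bigwedge^{-1}=0$. In every case $\partial\rho=\gamma-\gamma'$, whence $[\gamma]=[\gamma']$.

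I expect the main obstacle to be checking that this induction closes up without leaving an unkillable tail: one has to verify both that the running error $\gamma_{j}-d(\rho_{j-1})$ is always a Koszul cocycle --- which requires pinning down the signs in $\partial=d\pm\mu_{x}$ and keeping track of the shift $\mathbb{F}^{*}[-g]$ --- and that the last error vanishes when the anti-diagonal reaches $\bigwedge^{0}$ or $\bigwedge^{1}$, the one place where the hypothesis $\operatorname{grade}J\ge1$ is genuinely used. Everything else is a routine diagram chase. (The statement is due to Miller \cite{miller}; the argument is included for completeness.)
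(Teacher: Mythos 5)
The paper does not prove Lemma \ref{millerlemma}; it cites it from \cite{miller}, so there is no internal proof to compare against, and your argument must be assessed on its own. It is correct, and it is the natural one. You first observe that $\gamma'=(\gamma_0,0,\dots,0)$ is itself a cycle, which uses exactly the two available facts $\mu_x(\gamma_0)=0$ (from $\gamma\in\mathrm{Ker}\,\partial$) and $d(\gamma_0)=0$ (hypothesis). You then kill the tail $(0,\gamma_1,\dots,\gamma_m)$ by a zig-zag construction of a preimage $\rho$ with $\rho_0=0$, solving $\mu_x(\rho_j)=\gamma_j-d(\rho_{j-1})$ one column at a time; the solvability rests on the columns of $\mathbb{F}^*$ being dual Koszul complexes (tensored with free modules $S_jF^*$) that are exact in cohomological degree below $g$ — the very same exactness that drives Lemma~\ref{notboundary}. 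The inductive cycle check $\mu_x\bigl(\gamma_j-d(\rho_{j-1})\bigr)=0$ follows from the cocycle relations $\mu_x(\gamma_j)=\pm d(\gamma_{j-1})$, the inductive relation $\mu_x(\rho_{j-1})=\gamma_{j-1}-d(\rho_{j-2})$, the (anti)commutativity of $d$ and $\mu_x$, and $d^2=0$, exactly as you outline, and your termination analysis at $\bigwedge^0$ (for $g$ even) and $\bigwedge^1$ (for $g$ odd) correctly accounts for why no residual component of $\partial\rho$ survives past index $m$.

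One cosmetic slip: injectivity of $\mu_x$ on $\bigwedge^0\otimes S_j$ is equivalent to $\operatorname{grade} J\geq 1$, not to $\operatorname{grade} J=g$; since $g\geq 1$ is implicit throughout the construction (otherwise $J$ has grade zero and the setup degenerates), this has no effect on the argument. The sign bookkeeping you flag is indeed routine and hides no obstruction.
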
 

Given our construction above, we obtain the following result.

\begin{prop}\label{completeresn}
The map $v:\mathbb{F}\rightarrow\mathbb{F}^*[-g]$ defined in (\ref{v}) is a quasi-isomorphism.
\end{prop}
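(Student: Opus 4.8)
The plan is to show that $v$ induces an isomorphism on homology in every degree. First I would check that both complexes have homology concentrated in degree $0$: since $\mathbb{F}$ is the Tate (free) resolution of $R/J$ over $R$, we have $H_n(\mathbb{F})=0$ for $n\neq 0$ and $H_0(\mathbb{F})=R/J$, generated by the class of $1\in\mathbb{F}_0=R$; and since $\mathbb{F}$ is a free resolution, $H^n(\mathbb{F}^*)=\Ext_R^n(R/J,R)$, which by the quasi-Gorenstein property $(\ref{quasigor})$ vanishes for $n\neq g$ and is $R/J$ for $n=g$, so $\mathbb{F}^*[-g]$ also has homology $R/J$ concentrated in degree $0$. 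Hence $H_n(v)$ is automatically an isomorphism for $n\neq 0$, and the proposition reduces to showing that $H_0(v)\colon R/J\to H^g(\mathbb{F}^*)$ is an isomorphism. As $v$ is multiplication by $\epsilon=v_0(1)$, the map $H_0(v)$ sends $1+J$ to the class $[\epsilon]$, so it suffices to prove that $[\epsilon]=\beta$.

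The identification $[\epsilon]=\beta=[\alpha]$ is the heart of the argument. By $(\ref{eq:alpha0})$ we have $\alpha_0=\epsilon_0+\mu_x(\gamma)$ for some $\gamma\in\bigwedge^{g-1}\otimes S_0$, where $\epsilon_0$ is the sole nonzero component of $\epsilon$ from $(\ref{eq:epsilon})$. Note that $\epsilon$ is a cycle, so that $v$ is genuinely a chain map: since $\alpha\in\mathrm{Ker}\,\partial$, its bottom component satisfies $\mu_x(\alpha_0)=0$, hence $\mu_x(\epsilon_0)=\mu_x(\alpha_0)-\mu_x^2(\gamma)=0$, while $d(\epsilon_0)=0$ by Lemma \ref{horizzero}; as the other components of $\epsilon$ vanish, $\partial(\epsilon)=0$. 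Now set $\gamma':=(\gamma,0,\dots,0)$ and $\tilde{\alpha}:=\alpha-\partial(\gamma')$. Since $\alpha\in\mathrm{Ker}\,\partial$, so is $\tilde{\alpha}$, with $[\tilde{\alpha}]=[\alpha]=\beta$; its degree-zero component is $\tilde{\alpha}_0=\alpha_0-\mu_x(\gamma)=\epsilon_0$, which satisfies $d(\tilde{\alpha}_0)=0$ by Lemma \ref{horizzero}. Applying Lemma \ref{millerlemma} to $\tilde{\alpha}$ then produces a cycle $(\tilde{\alpha}_0,0,\dots,0)=\epsilon$ with $[\epsilon]=[\tilde{\alpha}]=\beta$.

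To finish, recall that $\beta$ is the image of the generator $1+J$ of $R/J$ under the isomorphism $(\ref{eq:26})$, hence generates $H^g(\mathbb{F}^*)=\Ext_R^g(R/J,R)$. Therefore $H_0(v)$ and the isomorphism $(\ref{eq:26})$ are $R$-linear maps out of the cyclic module $R/J$ that agree on its generator $1+J$, so they coincide; in particular $H_0(v)$ is an isomorphism, and $v$ is a quasi-isomorphism.

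I expect the main obstacle to be the middle step. One cannot compare $\epsilon$ with $\alpha$ term by term, since $\epsilon_0$ and $\alpha_0$ differ by the boundary term $\mu_x(\gamma)$ and $\alpha$ may have nonzero components in positive divided-power degree while $\epsilon$ does not. The right move is to pass to the cohomologous cycle $\tilde{\alpha}$ whose degree-zero part is exactly $\epsilon_0$, and then invoke Miller's Lemma \ref{millerlemma} to annihilate the higher components; its hypothesis $d(\epsilon_0)=0$ is precisely the content of Lemma \ref{horizzero}. The remaining bookkeeping — signs, and the homological indexing of $\mathbb{F}^*[-g]$ — is routine.
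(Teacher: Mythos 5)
Your proof is correct and takes essentially the same approach as the paper: both arguments reduce the claim to showing that $v_0$ induces on homology the canonical isomorphism $R/J\to\Ext_R^g(R/J,R)$, then do so by rewriting $\alpha_0$ via (\ref{eq:alpha0}), subtracting the boundary $\partial(\gamma,0,\dots,0)$, applying Lemma \ref{horizzero} to get $d(\epsilon_0)=0$, and invoking Lemma \ref{millerlemma} to kill the higher divided-power components. Your explicit verification that $\epsilon$ is a cycle (so that $v$ is genuinely a chain map, via $\mu_x(\alpha_0)=0$ and $\mu_x^2=0$) is a detail the paper leaves implicit, but the two proofs are otherwise the same.
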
 

\begin{proof}
Since $\mathrm{Ext}_R^i(R/J,R)=0$ for all $i\neq g$, it suffices to show that $v_0\colon\mathbb{F}_0\rightarrow\mathbb{F}^*_g$ induces an isomorphism on homology.  However, we observe that the map induced by $v_0$ on homology is precisely the isomorphism $R/J\rightarrow\Ext_R^g(R/J,R)$ in (\ref{eq:26}).  Indeed, since $\beta=\alpha+\text{Im}\,\partial$, we have the following equalities 
\begin{align*}
\beta&=\left(t\left(\sum_{I}(-1)^{K}A_{1,..,r}^{i_1,\dots,i_r}dg^*_{k_1}\dots e^*_{k_g}\right)+\mu_x(\gamma),\alpha_1,\dots,\alpha_{\lceil\frac{g+1}{2}\rceil-1}\right)+\text{Im}\,\partial \\
&=\left(t\left(\sum_{I}(-1)^{K}A_{1,\dots,r}^{i_1,\dots,i_r}dg^*_{k_1}\dots dg^*_{k_g}\right),\alpha_1-d(\gamma),\dots,\alpha_{\lceil\frac{g+1}{2}\rceil-1}\right)+\partial(\gamma,0,\dots,0)+\text{Im}\,\partial \\
&=\left(t\left(\sum_{I}(-1)^{K}A_{1,\dots,r}^{i_1,\dots,i_r}dg^*_{k_1}\dots dg^*_{k_g}\right),\alpha_1-d(\gamma),\dots,\alpha_{\lceil\frac{g+1}{2}\rceil-1}\right)+\text{Im}\,\partial \\
&=\left(t\left(\sum_{I}(-1)^{K}A_{1,\dots,r}^{i_1,\dots,i_r}dg^*_{k_1}\dots dg^*_{k_g}\right),0,\dots,0\right)+\text{Im}\,\partial \\
&=\epsilon+\mathrm{Im}\,\partial
\end{align*}
where $K=k_1+\dots+k_g$ and $\sum_I$ denotes the sum over $1\leq i_1<\dots<i_r\leq s$, and where the first equality follows from (\ref{eq:alpha0}), the second equality follows from the fact that 
\begin{align*}
\partial(\gamma,0,\dots,0)=(\mu_x(\gamma),d(\gamma),0,\dots,0),
\end{align*}
the fourth equality follows from Lemma \ref{horizzero} and Lemma \ref{millerlemma}, and the last follows from (\ref{eq:epsilon}).  Therefore $\beta=v_0(1)+\mathrm{Im}\,\partial$, and $v$ is a quasi-isomorphism as desired. 
\end{proof}

Now we aim to show that the cone of the map constructed in (\ref{v}), is self-dual.  The next proposition gives conditions on any gluing map $\omega:\mathbb{F}\rightarrow\mathbb{F}^*[-g]$, which force its cone to be self-dual. 

\begin{prop}\label{selfdual}
Let $\omega:\mathbb{F}\rightarrow\mathbb{F}^*[-g]$ be the map such that
\begin{align*}
\omega_0(1)=\sum_{\substack{1\leq k_1<\dots<k_n\leq s \\ 1\leq\ell_1<\dots<\ell_{m}\leq r \\ n+2m=g}} c_{k_1,\dots,k_n}^{\ell_1,\dots,\ell_m}dg^*_{k_1}\dots dg^*_{k_n}T^*_{\ell_1}\dots T^*_{\ell_m} 
\end{align*}
and $\omega_i$ is defined by multiplication by $\omega_0(1)$.  If either $c_{k_1,\dots,k_n}^{\ell_1,\dots,\ell_m}=0$ for all $n\equiv 0,1\thinspace\mathrm{mod}\thinspace 4$ or for all $n\equiv 2,3\thinspace\mathrm{mod}\thinspace 4$ , then cone($\omega$) is self-dual.
\end{prop}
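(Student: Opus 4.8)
The plan is to construct the cone $C := \operatorname{cone}(\omega)$ explicitly as a $\mathbb{Z}$-graded complex of free $R$-modules, and then exhibit an explicit isomorphism $C^* \cong C[\,j\,]$ for a suitable shift $j$ (we expect $j = -g-1$ or thereabouts, coming from the fact that $\mathbb{F}^*[-g]$ sits in the cone shifted by one). The key observation is that $C$ is built from the double-complex pieces $\bigwedge^i \otimes D_j$ on the ``$\mathbb{F}$'' side and $\bigwedge^i \otimes S_j$ on the ``$\mathbb{F}^*$'' side, glued along the quasi-isomorphism $\omega$; dualizing $C$ interchanges the roles of $D$ and $S$ and of $\bigwedge^i$ with $\bigwedge^{s-i}$ via the self-duality of the Koszul complex $K(\underline g;R)$ already used in Corollary \ref{cohomgens}. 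So the whole argument reduces to checking that the gluing map is compatible with these identifications up to sign --- and that is exactly where the hypothesis on the vanishing of half the coefficients $c_{k_1,\dots,k_n}^{\ell_1,\dots,\ell_m}$ enters.

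First I would set up notation for $C$ and for the candidate isomorphism $\Phi\colon C^* \to C[\,j\,]$. On each summand $\bigwedge^i G \otimes D_j F$ the map $\Phi$ is the tensor product of the Koszul self-duality isomorphism $\bigwedge^i G^* \to \bigwedge^{s-i} G$ (sending $dg^*_{i_1}\cdots dg^*_{i_i}$ to $\pm\, dg_{k_1}\cdots dg_{k_{s-i}}$ with the complementary index set, with the sign $p = i(s-i) + k_1 + \dots + k_{s-i}$ from Corollary \ref{cohomgens}) with the canonical identification $(D_jF)^* \cong S_jF^*$ and its symmetric-to-divided counterpart on the other side. Then I would verify that $\Phi$ is a chain map: the vertical differentials $\mu_x$ and $\mu_x^T$ get swapped correctly because they are the Koszul differential and its transpose under exactly this self-duality, and the horizontal differentials $d$ (sending $dg_i^* \mapsto \sum_j a_{ji}T_j^*$, resp. $T_j \mapsto \sum_i a_{ji}dg_i$) are dual to one another by construction. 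The one remaining square to check is the one involving the gluing component $v_i$ (or $\omega_i$) itself: here I would compute $\omega_i^*$ under $\Phi$ and compare it with $\omega_i$ on the corresponding summand.

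The main obstacle --- and the place where the hypothesis does its work --- is precisely this last sign bookkeeping on the gluing square. Multiplication by $\omega_0(1) = \sum c_{k_1,\dots,k_n}^{\ell_1,\dots,\ell_m}\, dg^*_{k_1}\cdots dg^*_{k_n}T^*_{\ell_1}\cdots T^*_{\ell_m}$ inside the DG-module structure, when transported through $\Phi$, produces a sign that depends on $n$ (and on the Koszul self-duality sign $p$, which also depends on $n$ since the exterior degree of the relevant piece is controlled by $n$) together with a Koszul-type sign $(-1)^{n(n-1)/2}$ or $(-1)^{m}$ coming from reversing the order of the generators when dualizing a product. I would tabulate the total sign as a function of $n \bmod 4$: the claim is that it equals $+1$ on two residue classes and $-1$ on the other two, so that if the coefficients supported on the ``bad'' pair of residue classes all vanish, $\Phi$ restricts to an honest isomorphism (not merely an isomorphism up to sign) on the gluing summands and hence on all of $C$. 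The two allowed cases in the statement ($n \equiv 0,1$ versus $n \equiv 2,3 \bmod 4$) correspond to choosing whether $\Phi$ or $-\Phi$ (equivalently, a shift by $j$ versus $j+2$, or composing with $-1$) is the self-duality isomorphism; either way cone$(\omega)$ is self-dual in the sense of the definition, since the definition only requires $C^* \cong C[\,i\,]$ up to isomorphism. Finally I would note that $C$ is a complete resolution: exactness of $C$ and $C^*$ follows from $\omega$ being a quasi-isomorphism (Proposition \ref{completeresn} supplies this for $\omega = v$, and the argument for a general $\omega$ as in the statement is the same once one checks $\omega$ still induces the identity on $\operatorname{Ext}^g_R(R/J,R) \cong R/J$), and the positive truncation agrees with $\mathbb{F}$.
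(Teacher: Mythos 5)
Your plan misidentifies the isomorphism $C^*\cong C[\,j\,]$ in a way that would not work. You propose building $\Phi$ from the Koszul self-duality $\bigwedge^i G^*\to\bigwedge^{s-i}G$ (together with $(D_jF)^*\cong S_jF^*$), but this sends a summand $\bigwedge^i G^*\otimes S_jF^*$ of internal Koszul degree $i$ to a summand $\bigwedge^{s-i}G\otimes D_jF$ of internal Koszul degree $s-i$, while leaving the divided/symmetric degree $j$ fixed. The total homological shift $(s-i+2j)-(\text{original degree})$ then depends on $j$, not just on the total degree, so this cannot possibly assemble into a chain isomorphism $C^*\to C[\,j\,]$ for any single shift $j$. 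The Koszul self-duality that appears in Corollary~\ref{cohomgens} plays no role in the self-duality of the cone.

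The correct observation is much more elementary, and it is what the paper does. Because the cone has the form $T_j=\mathbb{F}_{g-1-j}^*\oplus\mathbb{F}_j$ and all modules are finite free, dualizing simply gives $T_j^*=\mathbb{F}_{g-1-j}\oplus\mathbb{F}_j^*$, which is the entry $T_{g-1-j}$ with its two direct summands interchanged. No Koszul self-duality is needed: the candidate isomorphism $\phi_j$ is literally the block swap matrix $\begin{pmatrix}0&\pm\mathrm{Id}\\\pm\mathrm{Id}&0\end{pmatrix}$. The content of the proof is then to check that $\phi$ commutes with the block differentials $D_j=\begin{pmatrix}\partial_{g-j}^T&\omega_j\\0&-\partial_j\end{pmatrix}$, and this comes down to comparing $\omega_j^T$ with $\omega_{g-j}$. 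That comparison (expanding the product $dg_{p_1}\cdots T_{q_u}\cdot\omega_0(1)$ both ways, and collecting the sign $(-1)^{n(n-1)/2}$) is where the hypothesis on vanishing of the coefficients in two of the four residue classes of $n\bmod 4$ enters: it forces $\omega_{g-j}=+\omega_j^T$ for all $j$, or $\omega_{g-j}=-\omega_j^T$ for all $j$, and the two $\pm$ choices of block matrix for $\phi_j$ handle the two cases. So you have correctly located the role of the hypothesis --- it is uniformity of a sign depending on $n\bmod 4$ --- but you have attached it to the wrong isomorphism. Finally, the remark about exactness and $C$ being a complete resolution is not part of this proposition; Proposition~\ref{selfdual} asserts only self-duality of the cone, while completeness is handled separately via Proposition~\ref{completeresn} in the paper's Theorem~\ref{completeresnagain}.
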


\begin{proof}
We first show that $\omega_j^T=\pm\omega_{g-j}$ for all $j$, where we view each $\omega_j$ as a matrix and $(\,\underline{\hspace{11pt}}\,)^T$ denotes the transpose.  Note that for $t+2u=j$, we have
\begin{align*}
\omega_j&\left(dg_{p_1}\dots dg_{p_t}T_{q_1}\dots T_{q_u}\right)= \\
&=\left(dg_{p_1}\dots dg_{p_t}T_{q_1}\dots T_{q_u}\right)\cdot\left(\sum_{\substack{1\leq k_1<\dots<k_n\leq s \\ 1\leq\ell_1<\dots<\ell_{m}\leq r \\ n+2m=g}} c_{k_1,\dots,k_n}^{\ell_1,\dots,\ell_m}dg^*_{k_1}\dots e^*_{k_n}T^*_{\ell_1}\dots T^*_{\ell_m}\right) \\
&=\sum_{\substack{1\leq k_1<\dots<k_n\leq s \\ 1\leq\ell_1<\dots<\ell_{m}\leq r \\ n+2m=g}} c_{k_1,\dots,k_n}^{\ell_1,\dots,\ell_m}dg_{p_1}\dots dg_{p_t}T_{q_1}\dots T_{q_u}dg^*_{k_1}\dots dg^*_{k_n}T^*_{\ell_1}\dots T^*_{\ell_m},
\end{align*}
and we observe that $dg_{p_1}\dots dg_{p_t}T_{q_1}\dots T_{q_u}dg^*_{k_1}\dots dg^*_{k_n}T^*_{\ell_1}\dots T^*_{\ell_m}$ is given by
\begin{align}\label{eq:piecewise}
\begin{cases} 
      (-1)^zdg^*_{v_1}\dots dg^*_{v_{n-t}}T^*_{x_1}\dots T^*_{x_{m-u}} & dg_{p_1}\dots dg_{p_t}T_{q_1}\dots T_{q_u}|dg_{k_1}\dots dg_{k_n}T_{\ell_1}\dots T_{\ell_m}  \\
      0 & \text{otherwise} \\
   \end{cases}
\end{align}
for some $z\in\mathbb{Z}$, where $\{v_1,\dots,v_{n-t}\}$ is the complement of $\{p_1,\dots,p_{t}\}$ in $\{k_1,\dots,k_{n}\}$ and $\{x_1,\dots,x_{m-u}\}$ is the complement of $\{q_1,\dots,q_{u}\}$ in $\{\ell_1,\dots,\ell_{m}\}$.  
\newline 
\indent Similarly we have that $\omega_{g-j}\big(dg_{v_1}\dots dg_{v_{n-t}}T_{x_1}\dots T_{x_{m-u}}\big)$ is given by
\begin{align}\label{omegagminusj}
\sum_{\substack{1\leq k_1<\dots<k_n\leq s \\ 1\leq\ell_1<\dots<\ell_{m}\leq r \\ n+2m=g}} c_{k_1,\dots,k_n}^{\ell_1,\dots,\ell_m}dg_{v_1}\dots dg_{v_{n-t}}T_{x_1}\dots T_{x_{m-u}}dg^*_{k_1}\dots dg^*_{k_n}T^*_{\ell_1}\dots T^*_{\ell_m}.
\end{align}
From (\ref{eq:piecewise}), in the case that $dg_{p_1}\dots dg_{p_t}T_{q_1}\dots T_{q_u}|dg_{k_1}\dots dg_{k_n}T_{\ell_1}\dots T_{\ell_m}$, we see that
\begin{align}\label{messyequality}
&dg^*_{k_1}\dots dg^*_{k_n}T^*_{\ell_1}\dots T^*_{\ell_m}=(-1)^zT^*_{q_u}\dots T^*_{q_1}dg^*_{p_t}\dots dg^*_{p_1}dg^*_{v_1}\dots dg^*_{v_{n-t}}T^*_{x_1}\dots T^*_{x_{m-u}}.
\end{align}
To show that $\omega_j^T=\pm\omega_{g-j}$ we apply (\ref{messyequality}) to simplify (\ref{omegagminusj}) as follows
\begin{align*}
dg_{v_1}&\dots dg_{v_{n-t}}T_{x_1}\dots T_{x_{m-u}}dg^*_{k_1}\dots dg^*_{k_n}T^*_{\ell_1}\dots T^*_{\ell_m}=  \\
&=(-1)^zdg_{v_1}\dots dg_{v_{n-t}}T_{x_1}\dots T_{x_{m-u}}T^*_{q_u}\dots T^*_{q_1}dg^*_{p_t}\dots dg^*_{p_1}dg^*_{v_1}\dots dg^*_{v_{n-t}}T^*_{x_1}\dots T^*_{x_{m-u}} \\
&=(-1)^zdg_{v_1}\dots dg_{v_{n-t}}dg^*_{p_t}\dots dg^*_{p_1}T^*_{q_1}\dots T^*_{q_u}dg^*_{v_1}\dots dg^*_{v_{n-t}} \\
&=(-1)^z(-1)^{(n-t)(\frac{t+n-1}{2})}dg^*_{p_t}\dots dg^*_{p_1}T^*_{q_1}\dots T^*_{q_u}
\end{align*} 
where the last two equalities follow from reordering the terms and simplifying each $T_{x_i} T^*_{x_i}$ and $dg_{v_i} dg^*_{v_i}$.  Reordering the $dg^*_{k_i}$ terms and keeping track of the resulting sign changes, we have that 
\begin{align*}
dg_{v_1}&\dots dg_{v_{n-t}}T_{x_1}\dots T_{x_{m-u}}dg^*_{k_1}\dots dg^*_{k_n}T^*_{\ell_1}\dots T^*_{\ell_m}=  \\
&=(-1)^z(-1)^{t(n-t)+\frac{(n-t)(n-t-1)}{2}+\frac{t(t-1)}{2}}dg^*_{p_1}\dots dg^*_{p_t}T^*_{q_1}\dots T^*_{q_u} \\
&=(-1)^z(-1)^{\frac{n(n-1)}{2}}dg^*_{p_1}\dots dg^*_{p_t}T^*_{q_1}\dots T^*_{q_u} \\
&=\begin{cases} 
      (-1)^zdg^*_{p_1}\dots dg^*_{p_t}T^*_{q_1}\dots T^*_{q_u} & n\equiv 0,1\thinspace\text{mod}\thinspace 4   \\
      (-1)^{z+1}dg^*_{p_1}\dots dg^*_{p_t}T^*_{q_1}\dots T^*_{q_u} & n\equiv 2,3\thinspace\text{mod}\thinspace 4   \\
         \end{cases},
\end{align*}
so that $\omega_{g-j}\big(dg_{v_1}\dots dg_{v_{n-t}}T_{x_1}\dots T_{x_{m-u}}\big)$ is given by
\begin{align}\label{omegagminusj2}
&=\begin{cases} 
      \sum c_{k_1,\dots,k_n}^{\ell_1,\dots,\ell_m}(-1)^zdg^*_{p_1}\dots dg^*_{p_t}T^*_{q_1}\dots T^*_{q_u} & n\equiv 0,1\thinspace\text{mod}\thinspace 4   \\ \\
      \sum c_{k_1,\dots,k_n}^{\ell_1,\dots,\ell_m}(-1)^{z+1}dg^*_{p_1}\dots dg^*_{p_t}T^*_{q_1}\dots T^*_{q_u} & n\equiv 2,3\thinspace\text{mod}\thinspace 4   \\
         \end{cases},
\end{align}
where we have dropped the indices on the sum from (\ref{omegagminusj}) for ease of notation.  Comparing our computation for $\omega_{g-j}$ in (\ref{omegagminusj2}) to our computation for $\omega_j$ in (\ref{eq:piecewise}) and applying our assumption on $c_{k_1,\dots,k_n}^{\ell_1,\dots,\ell_m}$, we see that either $\omega_{g-j}=\omega_j^T$ for all $j$ or $\omega_{g-j}=-\omega_j^T$ for all $j$.
\newline
\indent Now $T:=\,\mathrm{cone}(\omega)$ is the complex 
\begin{align}\label{eq:cone}
\dots\longrightarrow \mathbb{F}_{g+1}\overset{D_{g+1}}\longrightarrow \mathbb{F}_g\overset{D_g}\longrightarrow \mathbb{F}_0^*\oplus \mathbb{F}_{g-1}\longrightarrow\dots\longrightarrow \mathbb{F}_{g-2}^*\oplus \mathbb{F}_1\overset{D_1}\longrightarrow \mathbb{F}_{g-1}^*\oplus \mathbb{F}_0\overset{D_0}\longrightarrow \mathbb{F}_g^*\longrightarrow\dots
\end{align}
with differentials given by the block matrices
\begin{align*}
D_j=
\left[
\begin{array}{c|c}
\partial_{g-j}^T & \omega_j \\
\hline
0 & -\partial_j
\end{array}
\right].
\end{align*}
We define $\phi\colon T\longrightarrow T^*[g-1]$, shown in the diagram below, as follows.
\begin{center}
\begin{tikzcd}[column sep=small]
\cdots\arrow{r} & F_{g+1} \arrow{d}{\phi_{g+1}} \arrow{r}{D_{g+1}} & F_g \arrow{r}{D_g} \arrow{d}{\phi_g} & F_0^*\oplus F_{g-1} \arrow{r}{D_{g-1}} \arrow{d}{\phi_{g-1}} &  \cdots \arrow{r}{D_2} & F_{g-2}^*\oplus F_1 \arrow{d}{\phi_1} \arrow{r}{D_1} & F_{g-1}^*\oplus F_0 \arrow{d}{\phi_0} \arrow{r}{D_0} & F_g^* \arrow{r}\arrow{d}{\phi_{-1}} &\cdots\\
 \cdots \arrow{r} & F_{g+1} \arrow{r}{D_{-1}^T} & F_g \arrow{r}{D_0^T} & F_{g-1}\oplus F_0^*  \arrow{r}{D_{1}^T}  &  \cdots \arrow{r}{D_{g-2}^T} & F_1\oplus F_{g-2}^*  \arrow{r}{D_{g-1}^T} &  F_0\oplus F_{g-1}^*  \arrow{r}{D_g^T} & F_g^* \arrow{r} &\cdots 
\end{tikzcd}
\end{center}
In the case that $\omega_{g-j}=\omega_j^T$ for all $j$, we define $\phi$ by
\begin{align*}
\phi_j=\begin{cases} 
      {\left[
\begin{array}{c|c}
0 & -\text{Id} \\
\hline
\text{Id} & 0
\end{array}
\right]} & \text{for} \thinspace\thinspace j \thinspace\thinspace\text{even}   \vspace{0.3cm}\\
      {\left[
\begin{array}{c|c}
0 & \text{Id} \\
\hline
-\text{Id} & 0
\end{array}
\right]} & \text{for} \thinspace\thinspace j \thinspace\thinspace\text{odd}   \\
   \end{cases},
\end{align*}
and in the case that $\omega_{g-j}=-\omega_j^T$ for all $j$, we define $\phi$ by 
\begin{align*}
\phi_j=\begin{cases} 
      {\left[
\begin{array}{c|c}
0 & \text{Id} \\
\hline
\text{Id} & 0
\end{array}
\right]} & \text{for} \thinspace\thinspace j \thinspace\thinspace\text{even}   \vspace{0.3cm}\\
      {\left[
\begin{array}{c|c}
0 & -\text{Id} \\
\hline
-\text{Id} & 0
\end{array}
\right]} & \text{for} \thinspace\thinspace j \thinspace\thinspace\text{odd}   \\
   \end{cases}.
\end{align*}
In either case, it is easy to see that the diagram commutes, and that $\phi$ is an isomorphism between cone($\omega$) and its shifted dual. 
\end{proof}

Recall that our gluing map $v$ defined in (\ref{v}) lands in the first column of the right side of diagram (\ref{diagram}) and thus automatically satisfies the condition on $c_{k_1,\dots,k_n}^{\ell_1,\dots,\ell_m}$ in Proposition \ref{selfdual}; although, this need not be the case for any gluing map $\omega_0$ as in the proposition since it may land in multiple columns.  

To complete our construction, we let 
\begin{align}\label{T}
\mathbb{T}:=\,\mathrm{cone}(v), 
\end{align}
where $v:\mathbb{F}\longrightarrow\mathbb{F}^*[-g]$ is the map defined in (\ref{v}).  We note that by Proposition \ref{completeresn}, $\mathbb{T}$ is exact, and its dual is also exact.  Indeed, by Proposition \ref{selfdual}, it is self-dual.  The fact that its truncation $\mathbb{T}_{\geq g}$ is the tail of a free resolution of $R/J$ is clear from (\ref{eq:cone}).  Thus we have proved the following result.

\begin{thm}\label{completeresnagain}
$\mathbb{T}$ is a self-dual complete resolution of $R/J$ over $R$. \hfill\qedsymbol
\end{thm}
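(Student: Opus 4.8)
The plan is to assemble the ingredients already in place. Recall that $\mathbb{T}=\mathrm{cone}(v)$ for the gluing map $v\colon\mathbb{F}\to\mathbb{F}^*[-g]$ of (\ref{v}), and that each term $\mathbb{T}_n$ is a finite direct sum of free modules coming from $\mathbb{F}$ and from $\mathbb{F}^*$, hence is a finitely generated free $R$-module. So the only things left to verify are the two conditions in the definition of a complete resolution, together with self-duality.

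\emph{Exactness of $\mathbb{T}$ and of $\mathbb{T}^*$ (condition (1)).} By Proposition \ref{completeresn} the map $v$ is a quasi-isomorphism, and the mapping cone of a quasi-isomorphism is an exact complex; hence $\mathbb{T}$ is exact. For the exactness of $\mathbb{T}^*$ I would appeal to self-duality, established in the next step: once $\mathbb{T}^*\cong\mathbb{T}[\,i\,]$ for some integer $i$, exactness of $\mathbb{T}$ immediately gives exactness of $\mathbb{T}^*$.

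\emph{Self-duality.} I would apply Proposition \ref{selfdual} to the gluing map $\omega=v$. By construction (\ref{eq:epsilon}), the element $v_0(1)=\epsilon$ is a sum of monomials $dg^*_{k_1}\cdots dg^*_{k_g}$ with no symmetric-algebra factors $T^*_\ell$ present; that is, in the notation of Proposition \ref{selfdual}, every nonzero coefficient $c^{\ell_1,\dots,\ell_m}_{k_1,\dots,k_n}$ of $v_0(1)$ has $m=0$ and $n=g$. Thus all nonzero coefficients are supported on the single residue class $n\equiv g\pmod 4$, so whichever of the pairs $\{0,1\}$ and $\{2,3\}$ modulo $4$ fails to contain $g$, the coefficients with $n$ in that pair all vanish. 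The hypothesis of Proposition \ref{selfdual} is therefore satisfied, and it follows that $\mathbb{T}=\mathrm{cone}(v)$ is self-dual, with $\mathbb{T}^*\cong\mathbb{T}[\,i\,]$ for a suitable integer $i$. Feeding this back into the previous step completes condition (1).

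\emph{The tail of a free resolution (condition (2)), and the main obstacle.} Reading off the description (\ref{eq:cone}) of the cone, in homological degrees at least $g$ the complex $\mathbb{T}$ consists of the free modules $\mathbb{F}_g,\mathbb{F}_{g+1},\dots$ with differentials equal, up to signs absorbed into the identifications, to those of $\mathbb{F}$; hence $\mathbb{T}_{\geq g}$ is isomorphic to the truncation $\mathbb{F}_{\geq g}$. Since the Tate resolution $\mathbb{F}$ is a free resolution of $R/J$ over $R$, condition (2) holds with $r=g$ and $F=\mathbb{F}$. Combining the two conditions shows $\mathbb{T}$ is a complete resolution of $R/J$, and by the self-duality step it is self-dual. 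The only step that requires genuine care is the one in the third paragraph, namely checking that the \emph{specific} gluing map $v$ of (\ref{v}) meets the coefficient hypothesis of the abstract Proposition \ref{selfdual}; the remaining items are routine bookkeeping about mapping cones and truncations, the substantive analytic content — that $v$ is a quasi-isomorphism, and that the cone of any admissible gluing map is self-dual — having already been carried out in Propositions \ref{completeresn} and \ref{selfdual}.
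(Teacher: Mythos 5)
Your proposal is correct and takes essentially the same approach as the paper: exactness of $\mathbb{T}$ from Proposition \ref{completeresn}, self-duality (hence exactness of $\mathbb{T}^*$) from Proposition \ref{selfdual}, and the truncation condition read off from (\ref{eq:cone}). The only addition is that you explicitly verify the coefficient hypothesis of Proposition \ref{selfdual} for the particular gluing map $v$ — namely that $v_0(1)=\epsilon$ is supported entirely on $n=g$, $m=0$ — a detail the paper leaves implicit, and your check is correct.
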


In fact, the resolution we have constructed is the minimal complete resolution of $R/J$, as we see in the following remark.

\begin{remk}
We note that the generators of $I$ and $J$ can be chosen so that the $a_{ji}$ as defined in (\ref{aji}) are in the maximal ideal.  Indeed, if 
\begin{align*}
f_j=\sum_{i\in \Gamma^c}u_ig_i+\sum_{i\in \Gamma} a_{ji}g_i
\end{align*}
where $\Gamma\subseteq \{1,\dots,s\}$, each $u_i$ is a unit, and each $a_{ji}\in\mathfrak{m}$, then
\begin{align*}
f_j+\mathfrak{m}(\underline{g})=\sum_{i\in \Gamma^c}u_ig_i+\mathfrak{m}(\underline{g}); 
\end{align*}
thus by Nakayama's Lemma, we can replace a generator $g_k$, for some $k\in \Gamma^c$, by $f_j$ in our minimal generating set for $(\underline{g})$.  However, 
\begin{align*}
R/J=R/(\bar{g_1},\dots,\bar{f_j},\dots,\bar{g_s})=Q/(f_1,\dots,f_r,g_1,\dots,\widehat{g_k},\dots g_s)=R/(\bar{g_1},\dots,\widehat{\bar{g_k}},\dots,\bar{g_s}), 
\end{align*}
so we may remove this generator from the ideal $(\underline{g})$ in our construction of the resolution of $R/J$.  Repeating this argument as necessary, we see that we may remove all of the generators $g_i$ with $i\in \Gamma^c$, and thus we may assume that all of the $a_{ji}$ are in the maximal ideal in our construction.   
\newline
\indent  By the discussion above, we see from (\ref{eq:epsilon}) that the map $v$ defined in (\ref{v}) is minimal, and thus its cone $\mathbb{T}$ is also minimal.  
\end{remk}  

Applying Proposition \ref{torext} and Theorem \ref{completeresnagain}, we arrive at the following Corollary.  

\begin{cor}
Let $Q$ be a Noetherian local ring and let $I=(\underline{f})\subseteq(\underline{g})$ be embedded complete intersection ideals in $Q$.  If $R=Q/I$ and $J=(\underline{g})/I\subseteq R$, then for any $R$-module $N$, there are isomorphisms
\begin{align*}
\widehat{\Tor}_n^R(R/J,N)\cong\widehat{\Ext}_R^{n+g-1}(R/J,N)
\end{align*}
for all $n\in\mathbb{Z}$.  \hfill\qedsymbol
\end{cor}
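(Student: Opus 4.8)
The plan is to derive the corollary as a purely formal consequence of the two principal results already established. By Theorem \ref{completeresnagain}, the complex $\mathbb{T}$ is a complete resolution of $R/J$ over $R$, and it is self-dual (this is exactly the content of Proposition \ref{selfdual} applied to $v$); in particular $R/J$ has a self-dual complete resolution over $R$. Applying Proposition \ref{torext} with $M = R/J$ then yields, for every $R$-module $N$, an isomorphism $\widehat{\Tor}_n^R(R/J,N) \cong \widehat{\Ext}_R^{n+i}(R/J,N)$ for all $n \in \mathbb{Z}$, where $i$ is the integer for which $\mathbb{T}^* \cong \mathbb{T}[\,i\,]$. So the whole content reduces to identifying this shift.

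The value $i = g-1$ is to be read off from the explicit isomorphism constructed in the proof of Proposition \ref{selfdual}: the chain map $\phi$ produced there exhibits $\mathbb{T} = \mathrm{cone}(v)$ as isomorphic to $\mathrm{cone}(v)^*[g-1] = \mathbb{T}^*[g-1]$, so that $i = g-1$ and the isomorphism of Proposition \ref{torext} becomes $\widehat{\Tor}_n^R(R/J,N) \cong \widehat{\Ext}_R^{n+g-1}(R/J,N)$, as asserted. Making this rigorous is only a matter of bookkeeping with the shift conventions used in Proposition \ref{torext} and with the shift incurred when dualizing the mapping cone $\mathbb{T}$; no new ideas enter. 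I would also address the mismatch in hypotheses: the corollary is stated for an arbitrary Noetherian local ring $Q$, whereas Section 4 runs under the standing assumption that $Q$ is Cohen--Macaulay. That assumption is used only in Corollary \ref{cohomgens}, and there only to secure the numerical equality $g = s-r$ that governs the complementary index sets $\{k_1,\dots,k_g\}$ appearing throughout the construction. But this equality holds for embedded complete intersection ideals over any Noetherian local ring: since $f_1,\dots,f_r$ is a $Q$-regular sequence contained in $(\underline g)$, one has $g = \mathrm{grade}\!\big((\underline g)/(\underline f);\,Q/(\underline f)\big) = \mathrm{grade}\!\big((\underline g);\,Q\big) - r = s-r$, using the standard fact that grade drops by exactly one each time a regular element of the ideal is factored out. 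Hence Theorem \ref{completeresnagain}, and with it the corollary, is valid in the stated generality.

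I expect no genuine obstacle: the argument is a direct invocation of Proposition \ref{torext} for the resolution produced in Theorem \ref{completeresnagain}, and the only step demanding any attention is confirming that the shift is precisely $i = g-1$ from the explicit form of the isomorphism $\phi$.
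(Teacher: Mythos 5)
Your argument is correct and matches the paper's: the corollary is the direct application of Proposition \ref{torext} to the self-dual complete resolution $\mathbb{T}$ produced in Theorem \ref{completeresnagain}, with the shift $i=g-1$ read off from the isomorphism $\phi\colon\mathbb{T}\to\mathbb{T}^*[g-1]$ constructed in the proof of Proposition \ref{selfdual}. Your additional observation that the Cohen--Macaulay hypothesis imposed at the start of Section 3 can safely be dropped --- because $g=\mathrm{grade}(J;R)=\mathrm{grade}((\underline g);Q)-r=s-r$ holds over any Noetherian local $Q$ (grade drops by one upon factoring out a regular element of the ideal), and this equality is the only place CM is actually invoked --- correctly reconciles the standing assumptions of Sections 3--4 with the weaker hypothesis in the corollary's statement.
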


\vspace{0.3cm}

\section*{\centering\normalsize{\normalfont{\uppercase{Acknowledgments}}}}

The author would like to thank the organizers of the \textit{Stable Cohomology: Foundations and Applications} workshop held in Snowbird, UT in the Spring of 2018 for the opportunity to speak about these results, and the attendees of the workshop for their helpful and insightful comments.  She would also like to thank her advisor, Claudia Miller, for her support and guidance throughout the project.  Finally, she thanks the anonymous referee for the insightful comments that have greatly improved the paper.   

\vspace{0.3cm}

\bibliographystyle{siam}
\bibliography{selfdual}

\vspace{1.25cm}

\noindent\textit{Affiliation:} \textsc{Department of Mathematics, Syracuse University, Syracuse, NY}
\vspace{2pt}
\newline
\noindent\textit{Current Address:} \textsc{Department of Mathematics, Yale University, New Haven, CT}
\vspace{2pt}
\newline
\textit{E-mail Address}: rachel.diethorn@yale.edu

 \end{document}